\theoremstyle{plain}      \newtheorem{theorem}{Theorem}[section]
                          \newtheorem{lemma}[theorem]{Lemma}
                          \newtheorem{prop}[theorem]{Proposition}
                          \newtheorem{cor}[theorem]{Corollary}
\theoremstyle{remark}     \newtheorem*{rem}{Remark}
\theoremstyle{definition}
\numberwithin{equation}{section}
\date{}
\newcommand\tv{\text{TV}}
\newcommand\real{\mathbb{R}}
\begin{document}

\title{Carries, Shuffling, and Symmetric Functions}

\author{Persi Diaconis}
\address{Department of Mathematics and Statistics\\
Stanford University\\ Stanford, CA 94305}

\author{Jason Fulman}
\address{Department of Mathematics\\
University of Southern California\\
Los Angeles, CA 90089-2532} \email{fulman@usc.edu}

\keywords{Carries, shuffling, symmetric function, autoregressive process,
Veronese map}

\subjclass{60C05, 60J10, 05E05}

\date{Version of January 27, 2009}

\begin{abstract}
  The ``carries'' when $n$ random numbers are added base $b$ form a
  Markov chain with an ``amazing'' transition matrix determined by
  Holte \cite{holte}. This same Markov chain occurs in following the number of
  descents or rising sequences when $n$ cards are repeatedly riffle
  shuffled. We give generating and symmetric function proofs and
  determine the rate of convergence of this Markov chain to
  stationarity. Similar results are given for type $B$ shuffles. We also
  develop connections with Gaussian autoregressive processes and the
  Veronese mapping of commutative algebra.
\end{abstract}

\maketitle

\section{Introduction}\label{sec1}

We use generating functions and symmetric function theory to explain a
surprising coincidence: when $n$-long integers are added base-$b$, the
distribution of ``carries'' is the same as the distribution of
descents when $n$ cards are repeatedly riffled shuffled. The
explanation yields a sharp analysis of convergence to stationarity of
the associated Markov chains. A similar analysis goes through for
``type $B$'' shuffles. In this introduction, we first explain the
carries process, then riffle shuffling and finally the connection.

\subsection{Carries}\label{sub11}

Consider adding three $50$-digit numbers base 10 (in the top row,
italics are used to indicate the carries):
\begin{small}
\begin{equation*}
\begin{array}{lllllllllll}
\textit{1}&\textit{12021}&\textit{01111}&\textit{11111}&\textit{11111}&
\textit{11011}&\textit{10111}&\textit{01111}&\textit{11111}&\textit{21011}&\textit{1112}\\
 &43935&23749&58561&74916&62215&47448&33196&51990&19807&27075\\
 &48537&53642&77448&32760&14421&72142&82116&37225&43300&51498\\
 &33618&41327&41561&16257&43616&55134&82714&63369&87142&45607\\ \hline
 1 &26091&18719&77571&23934&20253&74725&98027&52585&50250&24180
\end{array}
\end{equation*}\end{small}

For this example, $6/50=12\%$ of the columns have a carry of zero,
$40/50=80\%$ have a carry of one and $4/50=8\%$ have a carry of two.

If $n$ integers (base $b$) are produced by choosing their digits uniformly
at random in $\{0,1,2,\dots,b-1\}$, the sequence of carries
$\kappa_0=0,\kappa_1,\kappa_2,\dots$ forms a Markov chain taking values in
$\{0,1,2,\dots,n-1\}$. Holte \cite{holte} studied this Markov chain and
found fascinating structure in its ``amazing'' transition matrix
$(P(i,j))$. Here $P(i,j)$ is the chance that the next carry is $j$ given
that the last carry was $i$, and he showed, for $0\leq i, j\leq n-1$, that
\begin{equation}
P(i,j)=\frac{1}{b^n}\sum_{l=0}^{j-\lfloor i/b\rfloor} (-1)^l
\binom{n+1}{l} \binom{n-1-i+(j+1-l)b}{n}. \label{11}
\end{equation}
For example, when $n=3$ the matrix becomes
\begin{equation*}
\frac{1}{6b^2}\begin{pmatrix}
b^2+3b+2&4b^2-4&b^2-3b+2\\
b^2-1&4b^2+2&b^2-1\\
b^2-3b+2&4b^2-4&b^2+3b+2
\end{pmatrix}.
\end{equation*}
Among many other things, Holte shows that the $j$th entry of the left
eigenvector with eigenvalue $1$ is $A(n,j)/n!$, with $A(n,j)$ the Eulerian
number: the number of permutations in the symmetric group $S_n$ with $j$
descents. Here $\sigma \in S_n$ is said to have a descent at $i$ if
$\sigma(i+1)<\sigma(i)$. So $\bm{5}\,1\,\bm{3}\,2\,4$ has two descents.
The fundamental theorem of Markov chain theory gives that $A(n,j)/n!$ is
the long term frequency of carries of $j$ when long random numbers are
added. Note that this is independent of the base $b$. When $n=3$,
$A(3,0)/6=1/6$, $A(3,1)/6=2/3$, $A(3,2)/6=1/6$ very roughly matching the
example above. We give alternative derivations of this at the end of
\ref{sec2}.

We will not detail the many nice properties Holte found but warmly
recommend his paper \cite{holte}. Some further properties are in
\cite{bren}, \cite{pd173}, which give appearances of this same matrix in
card shuffling and in the Veronese construction for graded algebras. This
is developed briefly in \ref{sec5} below.

\subsection{Shuffling}\label{sub12}

The usual method of shuffling cards proceeds by cutting a deck of $n$
cards into two approximately equal piles and then riffling the two
piles together into one pile. A realistic mathematical model was
created by Gilbert--Shannon--Reeds: cut off $c$ cards with probability
$\binom{n}{c}/2^n$. Drop cards sequentially as follows: if the left
pile has $A$ cards and the right pile has $B$ cards, drop the next
card from the bottom of the left pile with probability $A/(A+B)$ and
from the right pile with probability $B/(A+B)$. This is continued
until all cards are dropped.

A careful analysis of riffle shuffles is carried out in \cite{pd92} using
a generalization to $b$-shuffles. There, a deck of cards is cut into $b$
packets of size $c_1,c_2,\dots,c_b$ with probability $\binom{n}{c_1\dots
c_b}/b^n$. The packets are riffled together by dropping the next card with
probability proportional to packet size. Thus the original
Gilbert--Shannon--Reeds model corresponds to a $2$-shuffle. Two basic
facts established in \cite{pd92} are:

\begin{itemize}
\item The chance of the permutation $\sigma$ arising after a $b$-shuffle is
\begin{equation} \label{12} \frac{\binom{n+b-d(\sigma^{-1})-1}{n}}{b^n}
\end{equation}
with $d(\sigma^{-1})$ the number of descents in $\sigma^{-1}$.

\item An $a$-shuffle followed by a $b$-shuffle is the same as an
$ab$-shuffle.
\end{itemize}

Thus the result of $r$ $2$-shuffles is the same as a single $2^r$ shuffle
and so formula \eqref{12} gives a closed form expression for the chance of
any permutation after $r$ $2$-shuffles. This and some calculus allow a
sharp analysis of the rate of convergence: roughly $\tfrac32\log_2 n+c$
shuffles suffice to make the distribution within $2^{-c}$ of the uniform
distribution. Further details are in \cite{pd92}.

The combinatorics of riffle shuffles has expanded. An enumerative theory
of cycle and other properties under the $b$-shuffle measure \eqref{12} is
equivalent to the Gessel--Reutenauer enumeration jointly by cycles and
descents \cite{pd77,ges}. The combinatorics of riffle shuffling is
essentially the same as quasi-symmetric function theory
\cite{ful02,sta01}. There are extensions to other types (see \ref{sec4}
below) and to random walk on the chambers of hyperplane arrangements
\cite{bid,brd} and buildings \cite{brown}.  Much of this development is
surveyed in \cite{pd30}. Interesting new developments are in \cite{pd171}.

\subsection{The connection}\label{sub13}

Carries and riffle shuffling seem like different subjects. However, if
$P_b$ denotes the matrix \eqref{11}, Holte \cite{holte} showed that
\begin{equation} \label{hol}
P_aP_b=P_{ab}
\end{equation}
The eigenvalues of the matrix $P_b$ turn out to be the same as the
eigenvalues of the $b$-shuffle transition matrix (the multiplicities are
different). This, and the appearance of descents in both subjects, led us
to suspect and then prove an intimate connection. In \ref{sec2} we prove
the following.
\begin{theorem}
  The chance that the base-b carries chain goes from $0$ to $j$ in $r$ steps
  is equal to the chance that the permutation in $S_n$ obtained by
  performing $r$ successive $b$-shuffles (started at the identity) has
  $j$ descents.
\label{thm1}
\end{theorem}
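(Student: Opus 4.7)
The plan is to combine Holte's identity \eqref{hol} with the shuffle composition rule to reduce the statement to a single step, and then to construct a coupling that simultaneously realizes the carry and the shuffled permutation from the same random digits.

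First, iterating \eqref{hol} gives $P_b^{\,r} = P_{b^r}$, and the shuffle composition rule implies that $r$ successive $b$-shuffles of the identity produce a single $b^r$-shuffle. Writing $c = b^r$, it therefore suffices to prove: for every $c \geq 1$, the carry distribution $P_c(0,\cdot)$ equals the descent distribution of a $c$-shuffle of the identity.

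Next I construct the coupling. Let $X_1, \ldots, X_n$ be iid uniform on $\{0, 1, \ldots, c-1\}$, so that the carry $\lfloor (X_1 + \cdots + X_n)/c \rfloor$ has distribution $P_c(0,\cdot)$. Set $T_k = (X_1 + \cdots + X_k)/c$ and $V_k = T_k \bmod 1$, and define $\sigma \in S_n$ by letting $\sigma(k)$ be the rank of $V_k$ in $(V_1, \ldots, V_n)$, with ties broken by index. Two claims then yield the theorem: (a) $\sigma$ is distributed as a $c$-shuffle of the identity; (b) $d(\sigma) = \lfloor T_n \rfloor$ pointwise.

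Claim (b) is direct: since $X_{k+1}/c \in [0,1)$, the increment from $T_k$ to $T_{k+1}$ either stays within a single integer interval---forcing $V_{k+1} \geq V_k$, which by stable tie-breaking produces no descent of $\sigma$ at position $k$---or crosses exactly one integer, in which case $V_{k+1} < V_k$ strictly and position $k$ is a descent of $\sigma$. The total number of crossings along $k = 0, \ldots, n-1$ equals $\lfloor T_n \rfloor - \lfloor T_0 \rfloor = \lfloor T_n \rfloor$, and so equals $d(\sigma)$. This is a discrete counterpart of the classical Stanley--Foata identification of descents with the integer part of a sum of iid uniforms on $[0,1)$.

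Claim (a) is where the main work lies. I will argue by induction on $k$ that the labels $L_k := cV_k = (X_1 + \cdots + X_k) \bmod c$ are iid uniform on $\{0, \ldots, c-1\}$: conditional on $L_1, \ldots, L_k$, the next label $L_{k+1} = (L_k + X_{k+1}) \bmod c$ is a uniform translate (mod $c$) of the independent uniform $X_{k+1}$, and hence is itself uniform and independent of the past. The Bayer--Diaconis description of a $c$-shuffle as the permutation whose value at $k$ is the rank of the $k$-th of iid uniform labels (with stable tie-breaking) then identifies $\sigma$ with a $c$-shuffle, matching probability \eqref{12}. The main obstacle, beyond setting up the coupling, is the bookkeeping task of verifying that ``rank of $V_k$'' gives the forward $c$-shuffle of \eqref{12} rather than its inverse---here the joint distribution of $(d(\sigma), d(\sigma^{-1}))$ is not symmetric under the measure, so direction matters.
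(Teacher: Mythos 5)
Your proof is correct, but it takes a genuinely different route from the paper's. The paper proves this statement (Theorem \ref{direct}) by a generating function computation: it writes the chance of $j$ descents after $r$ $b$-shuffles as $\sum_{i} b^{-rn}\binom{n+b^r-i-1}{n}c^0_{ij}$ using \eqref{12}, evaluates the sum with Gessel's identity (Proposition \ref{gen}), and matches the answer with Holte's formula \eqref{11} for $P_{b^r}(0,j)$, finishing with \eqref{hol}. You instead build an explicit coupling on the space of digit sequences, and both of your claims check out: the residues $L_k=(X_1+\cdots+X_k)\bmod c$ are i.i.d.\ uniform, the rank permutation of i.i.d.\ uniform labels (with stable tie-breaking) has exactly the $c$-shuffle law \eqref{12} --- the direction you flag as the main obstacle does come out the right way, since the number of digit assignments that sort stably to a given word $v$ is $\binom{n+c-1-d(v)}{n}$, which is \eqref{12} for the permutation $v^{-1}$, i.e.\ for the rank map --- and the descents of that permutation count the integer crossings of the partial sums, hence equal the carry $\lfloor T_n\rfloor$ pointwise (the $k=0$ step never crosses, and ties give no descent). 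What each approach buys: the paper's generating function route delivers, from the same Proposition \ref{gen}, the companion Theorem \ref{identify} identifying the carries chain with the time reversal of the inverse-descent chain, which your coupling does not address; your argument is more elementary, realizes the identity on a single probability space rather than only in distribution, and degenerates as $c\to\infty$ into the classical identification of the Eulerian distribution with $\lfloor\sum U_i\rfloor$ that the paper uses in Lemma \ref{hyper} and Theorem \ref{stat}. It is essentially the bijective proof that the authors mention having found subsequently and present in \cite{pd173}.
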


We give a generating function proof which also yields a similar
statement for the inverse permutation along with enumerative results
of Gessel in \ref{sec2}. We have subsequently found a bijective proof of the theorem
which shows that the transition matrices of carries \eqref{11} and the Markov
chain generated by the number of descents after successive
$b$-shuffles are the same \cite{pd173}.

The more analytic proof given here allows us to use the
Robinson--Schensted--Knuth (RSK) correspondence and symmetric function
theory to show that the number of descents (and in fact any function of
the descent set) after $r$ $2$-shuffles is close to stationarity when
$r=\log_2 n+c$. (Note from \cite{pd92} that $\tfrac32 \log_2 n+c$ are
required for \textit{all} aspects of the permutation to be close to
stationarity.) The correspondence with carries shows that the carries
chain `settles down' after $\log_2 n+c$. Refining this, we show that for
large $n$, $\frac{1}{2}\log_b(n)+c$ steps of the carries chain are
necessary and sufficient for convergence to stationarity. Details are in
\ref{sec3}.

The discussion so far has all been on the permutation group. There are
well-established ``type $B$'' (hyperoctahedral)-shuffles
\cite{pd92,ber,ful01}. In \ref{sec4} we develop a parallel ``carries
process'' and show that theorems about type $B$ shuffles translate into
theorems about adding numbers. We also point out a connection with the
theory of rounding. \ref{sec5} shows that for large $n$, the carries
process is well approximated by a Gaussian autoregressive process, and
develops the connection with the Veronese mapping of commutative algebra.

\section{Two Markov Chains}\label{sec2}

In this section we show that two processes derived from the Markov chain
of repeated $b$-shuffles on the symmetric group are Markov chains with
transition probabilities from $0$ to $j$, the same as the carries chain.
As background, note that usually a function of a Markov chain is
\textit{not} a Markov chain. A simple example is nearest neighbor random
walk on the integers mod $n$, with $n$ odd, $n\geq 7$. Let the walk start
at $0$ and move left or right with probability $1/2$. Let $f(j)=1$ for
$0\leq j\leq(n-1)/2$, $f(j)=-1$ otherwise. If steps of the original walk
are denoted $X_0=0, X_1,X_2,\dots$ and $Y_j=f(X_j)$, then
$\{X_j\}_{j=0}^\infty$ is a Markov chain but $\{Y_j\}_{j=0}^\infty$ is
not: $\mathbb{P} \{Y_3=+|Y_2=+\}=2/3$, $\mathbb{P} \{Y_3=+|Y_2=+,
Y_1=+\}=1$. The literature on conditions for Markovianity are often called
``lumping of Markov chains.'' A useful introduction is \cite{kem} with
\cite{rog} a sophisticated extension.

To begin, we show that the two basic facts about riffle shuffles give a
generating function identity of Gessel (unpublished).
\begin{prop} \label{gen} Let $\sigma$ be a permutation with $d$
  descents. Let $c_{ij}^d$ be the number of ordered pairs $(\tau,\mu)$
  of permutations in $S_n$ such that $\tau$ has $i$ descents, $\mu$
  has $j$ descents, and $\tau \mu = \sigma$. Then
\begin{equation*}
\sum_{i,j \geq 0} \frac{c_{ij}^d s^{i+1}
 t^{j+1}}{(1-s)^{n+1} (1-t)^{n+1}} = \sum_{a,b \geq 0} {n+ab-d-1
 \choose n} s^a t^b.
\end{equation*}
\end{prop}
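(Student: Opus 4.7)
The plan is to compute in two ways the probability that an $a$-shuffle followed by a $b$-shuffle produces a given permutation, and then package the resulting numerical identity as a generating function. Write $P_c(\pi) = \binom{n+c-d(\pi^{-1})-1}{n}/c^n$ for the probability that a $c$-shuffle yields $\pi$, as recorded in \eqref{12}.

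By the composition rule, an $a$-shuffle followed by a $b$-shuffle is an $ab$-shuffle, so summing over intermediate permutations gives, for every $\rho\in S_n$ and all positive integers $a,b$,
\begin{equation*}
\binom{n+ab-d(\rho^{-1})-1}{n} = \sum_{\mu\tau=\rho}\binom{n+a-d(\tau^{-1})-1}{n}\binom{n+b-d(\mu^{-1})-1}{n}.
\end{equation*}
I would then apply the involution $\pi\mapsto\pi^{-1}$ to $\tau$, $\mu$, and $\rho$ (which sends $\mu\tau=\rho$ to $\tau^{-1}\mu^{-1}=\rho^{-1}$), converting each $d(\cdot^{-1})$ into an ordinary $d(\cdot)$ and replacing $\rho$ by $\sigma:=\rho^{-1}$. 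Setting $d:=d(\sigma)$ and grouping pairs by their descent numbers, this yields the key numerical identity
\begin{equation*}
\binom{n+ab-d-1}{n} = \sum_{i,j\geq 0} c_{ij}^d\binom{n+a-i-1}{n}\binom{n+b-j-1}{n}.
\end{equation*}
Both sides vanish when $a=0$ or $b=0$ (since $\binom{n-d-1}{n}=0$ for $d\geq 0$ and $\binom{n-i-1}{n}=0$ for $i\geq 0$), so the identity extends to all $a,b\geq 0$.

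Next, I would multiply by $s^a t^b$, sum over $a,b\geq 0$, and invoke the elementary identity
\begin{equation*}
\sum_{a\geq 0}\binom{n+a-i-1}{n}s^a = \frac{s^{i+1}}{(1-s)^{n+1}},
\end{equation*}
obtained from the negative binomial series via the substitution $m=a-i-1$, together with its counterpart in $t$. The double sum factors and the proposition falls out immediately.

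The only delicate point is the bookkeeping in the first step: the shuffle convolution naturally involves descent statistics of $\tau^{-1}$ and $\mu^{-1}$ (since $P_c$ depends on $d(\pi^{-1})$), whereas $c_{ij}^d$ is defined in terms of descent statistics of $\tau$ and $\mu$ themselves. A single application of inversion across the entire convolution restores the desired form. Everything after that is a formal manipulation of power series, and as a pleasant byproduct the identity shows that the quantity $\sum_{i,j} c_{ij}(\sigma)\binom{n+a-i-1}{n}\binom{n+b-j-1}{n}$ depends on $\sigma$ only through $d(\sigma)$, consistent with the notation $c_{ij}^d$.
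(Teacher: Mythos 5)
Your proposal is correct and follows essentially the same route as the paper: both start from the fact that an $a$-shuffle followed by a $b$-shuffle is an $ab$-shuffle together with formula \eqref{12}, extract the resulting convolution identity for a fixed target permutation, and then sum against $s^a t^b$ using the negative binomial series. The only difference is presentational — the paper phrases the convolution as an identity in the group algebra and takes the coefficient of $\sigma^{-1}$, whereas you write the probability convolution explicitly and handle the inverse-descent bookkeeping by a change of variables, which amounts to the same computation.
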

\begin{proof} Since an $a$-shuffle followed by a $b$-shuffle is an $ab$-shuffle,
the formula \eqref{12} implies that \[ \sum_{\mu \in S_n}{n+a-d(\mu)-1
\choose n} \mu^{-1} \cdot \sum_{\tau \in S_n}{n+b-d(\tau)-1 \choose n}
\tau^{-1} = \sum_{\sigma \in S_n}{n+ab-d(\sigma)-1 \choose n}
\sigma^{-1}.\] Multiplying both sides by $s^a t^b$, summing over all $a,b
\geq 0$, and then taking the coefficient of $\sigma^{-1}$ on both sides
yields that
\begin{eqnarray*}
& & \sum_{a,b \geq 0} {n+ab-d-1
 \choose n} s^a t^b\\ & = & \sum_{(\tau,\mu) \atop \tau \mu=\sigma} \left[ \sum_{a
 \geq 0} s^a {n+a-d(\mu)-1 \choose n} \cdot \sum_{b
 \geq 0} t^b {n+b-d(\tau)-1 \choose n} \right] \\
 & = & \sum_{(\tau,\mu) \atop \tau \mu=\sigma}
 \frac{s^{d(\mu)+1}}{(1-s)^{n+1}} \frac{t^{d(\tau)+1}}{(1-t)^{n+1}}\\
 & = & \sum_{i,j \geq 0} \frac{c_{ij}^d s^{i+1}
 t^{j+1}}{(1-s)^{n+1} (1-t)^{n+1}}.
 \end{eqnarray*} \end{proof}

Recall that if a Markov chain has transition probabilities $P(i,j)$, its
formal time reversal with respect to a stationary measure $\pi$ is defined
to have transition probabilities $P^*(i,j) = \frac{P(j,i)
  \pi(j)}{\pi(i)}$.  This $P^*$ is a Markov transition matrix which
also has $\pi$ as stationary measure. A Markov chain $P$ is reversible
with respect to $\pi$ if and only if $P=P^*$.

Theorem \ref{identify} identifies the carries Markov chain with the formal
time reversal of a chain arising in the theory of riffle shuffles. As in
the introduction, $\pi$ denotes the distribution on $\{0,1,\dots,n-1\}$
defined by $\pi(j) = \frac{A(n,j)}{n!}$, where $A(n,j)$ is the number of
permutations in $S_n$ with $j$ descents.
\begin{theorem} \label{identify} Let a Markov chain on the symmetric
  group $S_n$ begin at the identity and proceed by successive independent
  $b$-shuffles. Then the number of descents of $\tau^{-1}$ forms a
  Markov chain with stationary distribution $\pi(j)=\frac{A(n,j)}{n!}$,
  and its formal time reversal with respect to $\pi$ is identical with
  the carries Markov chain.
\end{theorem}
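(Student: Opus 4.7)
The plan is to exploit Proposition \ref{gen} inside the group algebra $\mathbb{Q}[S_n]$. Let $\tau_r$ denote the permutation after $r$ successive independent $b$-shuffles starting at the identity, and set $\sigma_r := \tau_r^{-1}$. Then $\sigma_r = \eta_1\cdots\eta_r$ where $\eta_i = \mu_i^{-1}$ and $\mu_i$ is a $b$-shuffle; by \eqref{12}, each $\eta_i$ assigns mass $p_b(d(\nu)) := \binom{n+b-d(\nu)-1}{n}/b^n$ to $\nu\in S_n$, depending only on its descent number. So the inverse-shuffle distribution $E_b := \sum_\nu p_b(d(\nu))\,\nu$ lies in the linear span of the descent class sums $\Delta^{(j)} := \sum_{d(\sigma)=j}\sigma$. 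Read in the group algebra, Proposition \ref{gen} asserts $\Delta^{(i)}\Delta^{(j)} = \sum_k c_{ij}^k\Delta^{(k)}$, making this span a commutative subalgebra; moreover $c_{ij}^d = c_{ji}^d$ by the $s\leftrightarrow t$ symmetry of the generating function in Proposition \ref{gen}.

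I would then prove by induction on $r$ the structural claim that, conditional on the descent history $(d(\sigma_0),\ldots,d(\sigma_r))$, $\sigma_r$ is uniformly distributed on $\{\sigma : d(\sigma) = d(\sigma_r)\}$. The inductive step, after Bayes, reduces to showing that $\sum_{d(\sigma)=i_r} p_b(d(\sigma^{-1}\rho))$ depends only on $d(\rho)$; changing variables to $\tau := \sigma^{-1}\rho$ turns this sum into $\sum_j p_b(j) c_{i_r,j}^{d(\rho)}$, which by Proposition \ref{gen} is a function of $d(\rho)$ alone. Conditional uniformity immediately yields weak lumpability, so $(d(\sigma_r))_{r\geq 0}$ is a Markov chain with transition $K(i,k) = (A(n,k)/A(n,i))\sum_j p_b(j)c_{ij}^k$; the stationary distribution is $\pi(j)=A(n,j)/n!$ because $\tau_r$ (and therefore $\sigma_r$) converges to uniform on $S_n$.

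The remaining task is to identify the formal time reversal $K^*(i,l) = K(l,i)\pi(l)/\pi(i) = \sum_j p_b(j)\,c_{jl}^i$ with Holte's matrix \eqref{11}. I would extract this by generating functions: write $p_b(j) = (1/b^n)[s^b]\,s^{j+1}/(1-s)^{n+1}$, pull $(1/b^n)[s^b]$ outside the $j$-sum, and use Proposition \ref{gen} with $d=i$ to express $\sum_j c_{jl}^i\,s^{j+1}$ in closed form via extraction of $[t^{l+1}]$ from the right-hand side. The $(1-s)^{n+1}$ factors cancel, the binomial expansion of $(1-t)^{n+1}$ produces the alternating sign, and evaluating $[s^b]$ yields exactly Holte's alternating-sum expression for $P(i,l)$; terms falling outside Holte's summation range vanish automatically.

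The main obstacle is the lumpability step. Because the marginal $P(d(\sigma_r) = i) = A(n,i)\binom{n+b^r-i-1}{n}/b^{rn}$ has such a clean form, it is tempting to infer the Markov property directly, but marginals are not enough; one really needs the stronger structural statement that $\sigma_r$ is conditionally uniform on its descent class along the entire descent history, and Proposition \ref{gen} is precisely the tool that makes this reduction work by passing from individual permutations to the structure constants $c_{ij}^k$.
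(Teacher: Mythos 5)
Your proof is correct, and its computational core is the same as the paper's: express the transition kernel of the inverse-descent chain as $K(i,k)=\frac{A(n,k)}{A(n,i)}\sum_j p_b(j)\,c_{ij}^k$, pass to the formal time reversal so the Eulerian ratios cancel, and identify $\sum_j p_b(j)\,c_{jl}^i$ with Holte's matrix \eqref{11} by extracting $[s^b t^{l+1}]$ from the generating function of Proposition \ref{gen} (the paper performs exactly this coefficient extraction, with the same convention that out-of-range binomials vanish). Where you genuinely diverge is the Markov property itself: the paper simply cites Corollary 2 of \cite{pd92} for the fact that $d(\tau_r^{-1})$ is a Markov chain and then reads off the kernel from the two-time joint distribution, whereas you prove lumpability from scratch by the inductive claim that, conditional on the whole descent history, $\tau_r^{-1}$ is uniform on its descent class; your inductive step correctly reduces, via the substitution $\tau=\sigma^{-1}\rho$, to the fact that the pair count $c_{ij}^{d}$ depends on $\sigma$ only through $d(\sigma)$, which is precisely what Proposition \ref{gen} supplies. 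This buys self-containedness --- your lumpability argument is in essence a proof of the result of \cite{pd92} that the paper invokes --- and it makes explicit the pleasant algebraic fact that the descent-class sums $\Delta^{(j)}$ span a commutative subalgebra of $\mathbb{Q}[S_n]$ with structure constants $c_{ij}^k=c_{ji}^k$; the cost is only length. You are also right to insist that the clean marginal formula for $\mathbb{P}(d(\sigma_r)=i)$ is not by itself evidence of Markovianity: the conditional-uniformity statement is the load-bearing step, and it is the one the paper outsources.
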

\begin{proof} Let $d(\tau_r^{-1})$ denote the number of descents of
  the inverse of the permutation $\tau_r$ obtained after $r$
  independent $b$-shuffles. Corollary 2 of \cite{pd92} showed that
  $d(\tau_r^{-1})$ forms a Markov chain. Note that the stationary
distribution of this chain is given by $\pi(j)=\frac{A(n,j)}{n!}$, since
$\tau_r^{-1}$ tends to a uniform element of $S_n$ as $r \rightarrow
\infty$.

We compute the transition probabilities of the Markov chain formed by
$d(\tau_r^{-1})$. By \eqref{12}, $\mathbb{P}(d(\tau_{r-1}^{-1})=i) =
\frac{A(n,i)
    {n+b^{r-1}-i-1 \choose n}}{b^{(r-1)n}}$. Clearly
\begin{align*}
&\mathbb{P}\left(d(\tau_{r-1}^{-1})=i,d(\tau_r^{-1})=j\right)\\
& \quad =  \sum_{\sigma:d(\sigma^{-1})=i} \frac{{n+b^{r-1}-i-1 \choose n}}
{b^{(r-1)n}} \sum_{k
\geq 0} \sum_{\mu: d(\mu^{-1})=k \atop d(\sigma^{-1} \mu^{-1})=j}
\frac{{n+b-k-1 \choose n}}{b^n}.
\end{align*}
Thus
\begin{align*}
\mathbb{P}\left(d(\tau_r^{-1})=j|d(\tau_{r-1}^{-1})=i\right)
& = \frac{\mathbb{P}\left(d(\tau_{r-1}^{-1})=i,d(\tau_r^{-1})=j\right)}
{\mathbb{P}\left(d(\tau_{r-1}^{-1})=i\right)}\\
& = \frac{1}{A(n,i)} \sum_{\sigma: d(\sigma^{-1})=i} \sum_{k \geq 0}
\sum_{\mu: d(\mu^{-1})=k \atop d(\sigma^{-1} \mu^{-1})=j} \frac{{n+b-k-1
\choose n}}{b^n}.
\end{align*} In the notation of Proposition \ref{gen}, this is
\begin{equation*}
\frac{A(n,j)}{A(n,i)} \frac{1}{b^n} \sum_{k \geq 0} c_{ik}^j {n+b-k-1
\choose n}.
\end{equation*}
Letting $[x^h] f(x)$ denote the coefficient of $x^h$ in a series $f(x)$,
this can be rewritten as
\begin{align*}
\lefteqn{ [t^b] \frac{A(n,j)}{A(n,i)} \frac{1}{b^n} \sum_{k \geq 0}
 c_{ik}^j \frac{t^{k+1}}{(1-t)^{n+1}}} \\
&\quad =  [t^b s^{i+1}] \frac{A(n,j)}{A(n,i)} \frac{(1-s)^{n+1}}{b^n}
\sum_{i,k \geq 0} c_{ik}^j \frac{s^{i+1} t^{k+1}}{(1-s)^{n+1}
(1-t)^{n+1}}.
\end{align*}
By Proposition \ref{gen}, this is equal to
\begin{align*}
\lefteqn{ [t^b s^{i+1}] \frac{A(n,j)}{A(n,i)} \frac{(1-s)^{n+1}}{b^n}
 \sum_{a,d \geq 0} {n+ad-j-1 \choose n} s^a t^d} \\
& \quad =  [s^{i+1}] \frac{A(n,j)}{A(n,i)} \frac{(1-s)^{n+1}}{b^n} \sum_{a
\geq
0} {n+ab-j-1 \choose n} s^a \\
& \quad =  \frac{A(n,j)}{A(n,i)} \frac{1}{b^n} \sum_{l \geq 0} (-1)^l {n+1
\choose l} {n-1-j+(i+1-l)b \choose n}.
\end{align*}
This is equal to $\frac{\pi(j) P(j,i)}{\pi(i)}$ where $P$ is the
transition probability of the carries chain \eqref{11}.
\end{proof}

The next result gives a second, more direct, interpretation of the
transition probabilities of the carries chain.
\begin{theorem}\label{direct}
  The chance that the base-$b$ carries chain goes from $0$ to $j$ in $r$ steps is
  equal to the chance that a permutation in $S_n$ obtained by
  performing $r$ successive $b$-shuffles (started at the identity) has
  $j$ descents.
\end{theorem}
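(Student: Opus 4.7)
The plan is to reduce to the case $r=1$ using the two composition laws in hand, and then to derive the resulting single-shuffle statement as a coefficient extraction in Gessel's identity (Proposition \ref{gen}).

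\emph{Reduction to $r=1$.} Holte's relation \eqref{hol} gives $P_b^r = P_{b^r}$, so the $r$-step carries transition from $0$ to $j$ equals $P_{b^r}(0,j)$. On the shuffling side, $r$ independent $b$-shuffles compose to a single $b^r$-shuffle, so $\tau_r$ is distributed by \eqref{12} with $b$ replaced by $B := b^r$. It therefore suffices to prove the single-shuffle identity
$$P_B(0,j) \;=\; \mathbb{P}\bigl(d(\sigma)=j\bigr),\qquad \sigma \text{ a single } B\text{-shuffle},$$
for every $B\ge 1$.

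\emph{Linking both sides through Proposition \ref{gen}.} I would expand the right-hand side using \eqref{12} and regroup permutations by $k=d(\sigma^{-1})$, obtaining $B^{-n}\sum_k c_{jk}\binom{n+B-k-1}{n}$, where $c_{jk}$ counts $\tau\in S_n$ with $d(\tau)=j$ and $d(\tau^{-1})=k$. Since the pairs $(\tau,\mu)$ with $\tau\mu=e$ are exactly the $(\tau,\tau^{-1})$, this $c_{jk}$ coincides with $c_{jk}^0$ in Proposition \ref{gen}. Specializing that proposition to $d=0$, I would extract the coefficient of $t^B$ on both sides (clearing the $(1-t)^{n+1}$ in the denominator), then the coefficient of $s^{j+1}$, and expand $(1-s)^{n+1}$ by the binomial theorem. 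The left-hand side produces exactly $\sum_k c_{jk}\binom{n+B-k-1}{n}$, while the right-hand side yields the alternating sum $\sum_l(-1)^l\binom{n+1}{l}\binom{n-1+(j+1-l)B}{n}$, which by \eqref{11} is $B^n P_B(0,j)$.

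\emph{Main obstacle.} Once Proposition \ref{gen} is in hand, the argument is a routine bookkeeping exercise. The only slightly non-obvious step is recognizing that the one-dimensional sum appearing in the single-shuffle probability is precisely what Gessel's two-variable identity with $d=0$ produces after the double coefficient extraction. Notably, this approach avoids any RSK-type symmetry; an alternative route would deduce the result directly from Theorem \ref{identify} by combining the time-reversal identity $P_b^r(0,j)=A(n,j)\,Q^r(j,0)$ with the symmetry $c_{jk}=c_{kj}$.
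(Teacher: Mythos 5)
Your proposal is correct and follows essentially the same route as the paper: both reduce to a single $b^r$-shuffle via the composition laws, expand the descent probability using \eqref{12} grouped by $d(\sigma^{-1})$, and evaluate the resulting sum $\sum_k c_{jk}^0\binom{n+b^r-k-1}{n}$ by a double coefficient extraction in Proposition \ref{gen} specialized to $d=0$, matching the answer against \eqref{11} and \eqref{hol}. The only differences are cosmetic (order of extracting the $s$- and $t$-coefficients, and invoking $P_{b^r}=P_b^r$ at the start rather than the end).
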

\begin{proof} By \eqref{12} and the fact that an $a$-shuffle followed by a $b$-shuffle
is an $ab$-shuffle, the chance that $r$ successive $b$-shuffles (started
at the identity) lead to a permutation with $j$ descents is
\begin{equation} \label{ed}
\sum_{i \geq 0} \frac{1}{b^{rn}} {n+b^r-i-1 \choose n} c_{ij}^0,
\end{equation}
where as in Proposition \ref{gen}, $c_{ij}^0$ denotes the number of
$\sigma \in S_n$ such that $d(\sigma^{-1})=i$ and $d(\sigma)=j$.

Proposition \ref{gen} gives that
\begin{equation*}
\sum_{i,k \geq 0} \frac{c_{ik}^0 s^{i+1}
t^{k+1}}{(1-s)^{n+1} (1-t)^{n+1}} = \sum_{a,d \geq 0} {n+ad-1 \choose n}
s^a t^d .
\end{equation*}
Taking the coefficient of $s^{b^r}$ on both sides gives that
\begin{equation*}
\sum_{i,k \geq 0} \frac{c_{ik}^0 {n+b^r-i-1 \choose n}
t^{k+1}}{(1-t)^{n+1}} = \sum_{d \geq 0} {n+b^r d -1 \choose n} t^d.
\end{equation*}
Comparing with equation \eqref{ed} gives that the chance that a
permutation obtained after $r$ successive $b$-shuffles has $j$ descents is
\begin{align*}
& \frac{1}{b^{rn}} [t^{j+1}] (1-t)^{n+1} \sum_{d \geq
0} {n+b^rd -1 \choose n} t^d\\
& \quad = \frac{1}{b^{rn}} \sum_{l \geq 0}
(-1)^l {n+1 \choose l}{n-1+(j+1-l)b^r \choose n}.
\end{align*} From \eqref{11}, this is equal to the carries
transition probability $P_{b^r}(0,j)$. By equation \eqref{hol}, this is
$P_b^r(0,j)$, as claimed.
\end{proof}

We conclude this section with two alternative derivations of the
stationary distribution of the carries chain. The following lemma will be
helpful. Stanley \cite{sta77} and Pitman \cite{Pi} give bijective proofs.

\begin{lemma} \label{hyper} Let $X_1,\dots,X_n$ be independent uniform
  $[0,1]$ random variables. Then for all integers $j$, $\mathbb{P}(j-1
  \leq \sum_{i=1}^n X_i < j)$ is equal to the probability that a
  uniformly chosen random permutation on $n$ symbols has $j$ descents.
\end{lemma}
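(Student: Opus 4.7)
The plan is to exhibit a measure-preserving bijection $T$ on $[0,1)^n$ that converts the integer-part event in the lemma into a descent-counting event for an i.i.d.\ uniform sequence (which will then be equidistributed with descents of a uniform random permutation). Define $T\colon[0,1)^n\to[0,1)^n$ by $T(x_1,\ldots,x_n)=(y_1,\ldots,y_n)$ with $y_i=\{x_1+\cdots+x_i\}$ (fractional part). The inverse is $x_1=y_1$, $x_i=y_i-y_{i-1}\pmod 1$ for $i\geq 2$, the Jacobian is lower-triangular with unit diagonal, and so $T$ preserves Lebesgue measure. In particular, $(Y_1,\ldots,Y_n)=T(X_1,\ldots,X_n)$ is a vector of i.i.d.\ uniform $[0,1)$ random variables.

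The key observation is that $\lfloor X_1+\cdots+X_n\rfloor$ counts exactly the ``wraparounds'' as the partial sums cross integer boundaries, and a wraparound between indices $i$ and $i+1$ occurs iff $Y_{i+1}<Y_i$. This is immediate from $Y_{i+1}=Y_i+X_{i+1}$ or $Y_i+X_{i+1}-1$ (according as $Y_i+X_{i+1}<1$ or $\geq 1$), combined with $X_{i+1}\in[0,1)$. Hence $\lfloor X_1+\cdots+X_n\rfloor$ equals the number of descents in the sequence $(Y_1,\ldots,Y_n)$.

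To conclude, I would invoke the standard fact that for i.i.d.\ continuous random variables $Y_1,\ldots,Y_n$, the rank permutation is uniform on $S_n$ and has the same descent set as the sequence. Therefore the descent count of $(Y_1,\ldots,Y_n)$ is distributed as that of a uniform random $\sigma\in S_n$, which yields $\mathbb{P}(\lfloor X_1+\cdots+X_n\rfloor=k)=A(n,k)/n!$; matching this with the integer-part inequality $j-1\leq\sum X_i<j$ gives the lemma.

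The only mildly delicate step is the wraparound-equals-descent identification, but this follows in one line from the defining recursion for the $Y_i$; the Jacobian computation and the rank-permutation argument are routine. A small bookkeeping point is aligning the indexing conventions on the two sides.
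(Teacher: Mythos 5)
Your proof is correct, and it supplies an argument where the paper gives none: the authors state Lemma \ref{hyper} without proof, citing Stanley and Pitman for bijective proofs, and the map you construct --- $y_i=\{x_1+\cdots+x_i\}$, sending the slab $\{k\le\sum_i x_i<k+1\}$ of the cube onto the set of sequences whose rank permutation has $k$ descents --- is precisely the classical bijection those references contain. The three ingredients you isolate (measure preservation via the explicit piecewise-translation inverse with unit-determinant Jacobian; the almost-sure identification of wraparounds with descents of $(Y_1,\dots,Y_n)$, which telescopes to $\lfloor\sum_i X_i\rfloor=\#\{i:Y_{i+1}<Y_i\}$ since $\lfloor X_1\rfloor=0$ a.s.; and the uniformity of the rank permutation of an i.i.d.\ continuous sample) are all sound, with ties ignorable since they occur with probability zero. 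One point deserves more emphasis than the passing mention you give it: what your argument establishes is $\mathbb{P}(j\le\sum_{i=1}^n X_i<j+1)=A(n,j)/n!$, whereas the lemma as printed pairs the event $\{j-1\le\sum_i X_i<j\}$ with ``$j$ descents''; under the paper's own descent convention (descent counts range over $0,\dots,n-1$) these differ by one, as the case $n=1$, $j=1$ already shows. The version you prove is the one the paper actually invokes later --- in the proofs of Theorems \ref{stat} and \ref{sharp} the lemma is used in the form $\pi(j)=\mathbb{P}(j\le\sum_i U_i<j+1)$ --- so your ``bookkeeping point'' is a genuine off-by-one in the lemma's wording, not a defect in your argument.
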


As usual we let $P^r(0,j)$ denote the distribution on $\{0,1,\dots,n-1\}$
after $r$ steps of the carries chain (for the base $b$ addition of $n$
numbers) started from $0$.

\begin{theorem} \label{stat} (\cite{holte}) The stationary distribution
  $\pi$ of the carries chain satisfies $\pi(j)=\frac{A(n,j)}{n!}$,
  where $A(n,j)$ is the number of permutations in $S_n$ with $j$
  descents.
\end{theorem}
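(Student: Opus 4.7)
The plan is to give two derivations, matching the phrase ``two alternative derivations'' that precedes the statement.

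\emph{First derivation, via Theorem \ref{direct}.} From formula \eqref{12}, the probability that $r$ successive $b$-shuffles yield a fixed permutation $\sigma$ is $\binom{n+b^{r}-d(\sigma^{-1})-1}{n}/b^{rn}$, which tends to $1/n!$ as $r \to \infty$. So the $b^{r}$-shuffle distribution converges to the uniform distribution on $S_n$, and hence the number of descents after $r$ shuffles tends in distribution to the descent statistic of a uniform random permutation, whose mass at $j$ is $A(n,j)/n!$ by definition. By Theorem \ref{direct} this limit equals $\lim_r P^{r}(0,j)$. The carries chain is irreducible and aperiodic (its entries are strictly positive for $b \geq 2$, as one reads off from \eqref{11}), so the limit is the unique stationary distribution.

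\emph{Second derivation, via Lemma \ref{hyper}.} I would verify directly that $\pi(j) = A(n,j)/n!$ is preserved by one step of the carries chain. Let $X_1, \dots, X_n$ be iid uniform on $[0,1]$ and decompose $b X_i = D_i + Y_i$ with $D_i = \lfloor b X_i \rfloor$ uniform on $\{0,1,\dots,b-1\}$ and $Y_i = b X_i - D_i$ uniform on $[0,1]$; the vectors $(D_1,\dots,D_n)$ and $(Y_1,\dots,Y_n)$ are independent. Set $\kappa' = \lfloor \sum_i Y_i \rfloor$ and $f = \sum_i Y_i - \kappa' \in [0,1)$, and write the integer $\sum_i D_i + \kappa' = qb + s$ with $0 \leq s \leq b-1$. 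Then
\[ \sum_{i=1}^{n} X_i \;=\; \frac{\sum_i D_i + \kappa' + f}{b} \;=\; q + \frac{s+f}{b}, \]
and since $s+f < b$, one has $\left\lfloor \sum_i X_i \right\rfloor = q = \left\lfloor (\sum_i D_i + \kappa')/b \right\rfloor$. By Lemma \ref{hyper} both $\lfloor \sum_i X_i \rfloor$ and $\kappa'$ have law $\pi$, and the right-hand side is one step of the carries chain started from $\kappa'$ (the digits $D_i$ are independent of $\kappa'$). Therefore $\pi$ is stationary.

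The main obstacle is setting up the decomposition $b X_i = D_i + Y_i$ and verifying the floor identity in the second derivation; once this arithmetic is right, Lemma \ref{hyper} delivers stationarity immediately. The first derivation has no significant obstacle beyond the elementary observation that the $b^{r}$-shuffle measure converges to the uniform distribution on $S_n$.
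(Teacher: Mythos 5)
Your proposal is correct, and your second derivation takes a genuinely different route from the paper's. Both of the paper's arguments compute $\lim_{r\to\infty}P^r(0,j)$ and identify the limit with $A(n,j)/n!$: the first uses $P_b^r=P_{b^r}$ to write $P^r(0,j)$ as a probability for a sum of discrete uniforms on $\{0,\dots,b^r-1\}$, rescales to continuous uniforms at the cost of an error term $E$ with $|E|\le n/b^r$, and applies Slutsky's theorem together with Lemma \ref{hyper}; the second passes to the limit in the closed form \eqref{11} and invokes Euler's identity $A(n,j)=\sum_l(-1)^l\binom{n+1}{l}(j+1-l)^n$. Your first derivation is a limit argument of the same general flavor, but routed through Theorem \ref{direct} and the convergence of the $b^r$-shuffle measure to uniform (essentially the observation already made inside the proof of Theorem \ref{identify}), so it trades the paper's discrete-to-continuous approximation for a known fact about shuffles. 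Your second derivation is the genuinely new one: the decomposition $bX_i=D_i+Y_i$ with $(D_1,\dots,D_n)$ independent of $(Y_1,\dots,Y_n)$, combined with the floor identity $\lfloor\sum_i X_i\rfloor=\lfloor(\kappa'+\sum_i D_i)/b\rfloor$, verifies $\pi P=\pi$ exactly in one step, with no limit, no Slutsky, and no Eulerian-number identity; what it buys is a self-contained fixed-point proof of stationarity whose only input is Lemma \ref{hyper}. One slip to repair in your first derivation: the entries of $P$ are \emph{not} all strictly positive for every $b\ge 2$ --- for $n=3$, $b=2$ one reads off $P(0,2)=(b^2-3b+2)/(6b^2)=0$ --- so justify ergodicity instead by noting $P(i,i)>0$ together with irreducibility, or by observing that $P_b^r=P_{b^r}$ has all entries positive once $b^r\ge n$; uniqueness of the stationary distribution then identifies your limit (first derivation) and your fixed point (second derivation) with $\pi$.
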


\begin{proof} By Holte \cite{holte}, $r$ steps of the base $b$ carries
  chain is equivalent to one step of the base $b^r$ carries chain.
  Letting $Y_1,\dots,Y_n$ be discrete i.i.d. uniforms on
  $\{0,1,\dots,b^r-1\}$, it follows that
\begin{equation*}
P^r(0,j) = \mathbb{P}\left(jb^r \leq \sum_{i=1}^n Y_i < (j+1)b^r\right).
\end{equation*}
Letting $U_1,\dots,U_n$ be continuous i.i.d. uniforms on $[0,b^r]$,
this implies that
\begin{align*}
P^r(0,j) & = \mathbb{P}\left(jb^r \leq \sum_{i=1}^n \lfloor
U_i\rfloor < (j+1)b^r\right) \\
& = \mathbb{P}\left(jb^r \leq \sum_{i=1}^n U_i - \sum_{i=1}^n (U_i - \lfloor U_i
\rfloor) < (j+1)b^r\right) \\
& = \mathbb{P}\left(j \leq \sum_{i=1}^n X_i - E < j+1\right).
\end{align*}
Here the $X_i=\frac{U_i}{b^r}$ are i.i.d. uniforms on $[0,1]$ and
$E=\frac{1}{b^r} \sum_{i=1}^n (U_i - \lfloor U_i \rfloor)$.

Although $E$ is not independent of the $X_i$'s, note that when $n$ is
fixed and $r\to\infty$, $E$ converges in probability to $0$.  Indeed,
this follows since $|E|\leq\frac{n}{b^r}$ with probability $1$.  Thus
Slutsky's theorem implies that
\begin{equation*}
\lim_{r\to\infty} P^r(0,j)=\mathbb{P}\left(j\leq\sum_{i=1}^n
X_i<j+1\right),
\end{equation*}
and the result follows from Lemma \ref{hyper}.
\end{proof}

A simple analytic way to find the stationary distribution uses the closed
form for $P^r(0,j)$. As $r$ tends to infinity,
\begin{equation*}
\frac{1}{b^{rn}}\binom{n-1+(j+1-l)b^r}{n} \rightarrow
\frac{(j+1-l)^n}{n!}.
\end{equation*}
Thus by \eqref{11} and \eqref{hol}, \begin{eqnarray*} P^r(0,j) & = &
\frac{1}{b^{rn}}\sum_{l=0}^{j} (-1)^l \binom{n+1}{l}
\binom{n-1+(j+1-l)b^r}{n} \\
& \rightarrow &
\frac{1}{n!}\sum_{l=0}^j(-1)^l\binom{n+1}{l}(j+1-l)^n=\frac{A(n,j)}{n!}.
\end{eqnarray*} The last equality is an identity, due to Euler, for the
$A(n,j)$ \cite{co}.

\section{Rates of Convergence}\label{sec3}

This section presents both upper and lower bounds on convergence to
stationarity for the equivalent Markov chains of \ref{sec2}. Theorem
\ref{up} shows that the descent set of a permutation (not just the number
of descents) is close to its stationary distribution after $r$
$b$-shuffles if $r=\log_b(n)+c$. This uses symmetric function theory.
Theorem \ref{newthm3.2} uses stochastic monotonicity to bound convergence
of the carries chain: it shows that at least $r=\frac{1}{2} \log_b(n)+c$
steps are needed and that $r=\log_b(n)+c$ steps suffice. Theorem
\ref{sharp} shows that for large $n$, $\frac{1}{2} \log_b(n)+c$ steps are
sufficient.

All of our results involve the total variation distance between
probability measures $P$ and $Q$ on a finite set $\mathcal{X}$, defined as
\begin{equation*}
\|P-Q\|_{\tv}=\dfrac12 \sum_x
|P(x)-Q(x)|=\max_{A\subseteq\mathcal{X}}|P(A)-Q(A)|.
\end{equation*}
\begin{theorem}\label{thm3.1}
  Consider the carries chain for base $b$ addition of $n$ numbers. Let
  $r= \lceil log_b(cn) \rceil$ with $c>0$. Let $P^r_0$ denote the
  distribution on $\{0,1,\dots,n-1\}$ given by taking $r$ steps in the
  carries chain, started from 0. Let $\pi$ be the stationary
  distribution of the carries chain. Then
\begin{equation*}
||P^r_0-\pi||_{\tv} \leq \dfrac12 \sqrt{e^{1/(2c^2)}-1}.
\end{equation*}
\end{theorem}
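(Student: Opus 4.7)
The plan is to combine Holte's multiplicativity \eqref{hol} with the Cauchy--Schwarz bound $\|\mu-\nu\|_{\tv}\leq\tfrac12\sqrt{\chi^2(\mu,\nu)}$ and then compute the chi-squared divergence via RSK and the Cauchy identity for Schur functions. Setting $m=b^r$, so that $m\geq cn$, equation \eqref{hol} reduces the task to bounding $\|P_m(0,\cdot)-\pi\|_{\tv}$; by Theorem~\ref{direct} this is the total variation distance between the descent-count distribution after a single $m$-shuffle and $\pi$.

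The key step is to lift the problem from descent counts to standard Young tableaux. Under RSK, each $\sigma\in S_n$ corresponds to a pair $(P(\sigma),Q(\sigma))$ of SYT of a common shape $\lambda$, and classically $d(\sigma)=d(Q(\sigma))$ while $d(\sigma^{-1})=d(P(\sigma))$, so the descent count depends only on $Q$. The $m$-shuffle formula \eqref{12} together with the quasisymmetric expansion of Schur functions, which yields $\sum_{P\text{ SYT of shape }\lambda}\binom{n+m-d(P)-1}{n}=s_\lambda(1^m)$, implies that for any SYT $Q_0$ of shape $\lambda_0$,
\begin{equation*}
\Pr(Q=Q_0)=s_{\lambda_0}(1^m)/m^n,
\end{equation*}
whereas under the uniform distribution on $S_n$ this probability is $f^{\lambda_0}/n!$. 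By the data-processing inequality for $\chi^2$ applied to the push-forward $Q\mapsto d(Q)$, $\chi^2(P^r_0,\pi)\leq\chi^2(\tilde P_m,\tilde\pi)$, where $\tilde P_m(Q_0)=s_{\lambda}(1^m)/m^n$ and $\tilde\pi(Q_0)=f^{\lambda}/n!$ are the two measures on SYT just described.

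The graded Cauchy identity $\sum_{\lambda\vdash n}s_\lambda(1^m)^2=[z^n](1-z)^{-m^2}=\binom{n+m^2-1}{n}$ now collapses the chi-squared into a product:
\begin{equation*}
\chi^2(\tilde P_m,\tilde\pi)+1=\sum_{Q_0}\frac{\tilde P_m(Q_0)^2}{\tilde\pi(Q_0)}=\frac{n!}{m^{2n}}\binom{n+m^2-1}{n}=\prod_{i=0}^{n-1}\left(1+\frac{i}{m^2}\right).
\end{equation*}
The inequality $\log(1+x)\leq x$ then gives $\chi^2(P^r_0,\pi)\leq\exp\!\bigl(n(n-1)/(2m^2)\bigr)-1\leq e^{n^2/(2m^2)}-1\leq e^{1/(2c^2)}-1$, using $m\geq cn$. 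The stated total variation estimate follows from $\|P^r_0-\pi\|_{\tv}\leq\tfrac12\sqrt{\chi^2(P^r_0,\pi)}$.

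The main obstacle is isolating the Schur specialization $\Pr(Q=Q_0)=s_\lambda(1^m)/m^n$: a direct attack on $\chi^2(P^r_0,\pi)$ via the Eulerian expansion \eqref{11} appears intractable, whereas lifting to $Q$ converts a sum over descent counts into a sum over partitions of $n$, at which point Cauchy's identity delivers a closed product form and the exponential bound drops out of a single elementary inequality.
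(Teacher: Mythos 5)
Your proof is correct and follows essentially the same route as the paper: reduce via Theorem~\ref{direct} to a single $b^r$-shuffle, pass through RSK to the Schur specialization $s_\lambda(\tfrac{1}{b^r},\dots,\tfrac{1}{b^r})$ for the $Q$-tableau distribution, apply the Cauchy identity to evaluate the quadratic sum as $\prod_{i=1}^{n-1}(1+i/b^{2r})-1$, and finish with $\log(1+x)\le x$. The only organizational difference is that you package the Cauchy--Schwarz step as a $\chi^2$/data-processing bound on the tableau distribution, whereas the paper runs a triangle inequality over descent sets before applying Cauchy--Schwarz, thereby obtaining the stronger Theorem~\ref{up} on the full descent-set distribution.
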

In fact, we prove a stronger result. This uses the notion of the descent
set of a permutation $\sigma$, defined as the set of $i$, $1 \leq i \leq
n-1$, such that $\sigma(i)>\sigma(i+1)$. For instance
$\bm{5}\,1\,\bm{3}\,2\,4$ has descent set $\{1,3\}$. Let
$\widetilde{P^r}(S)$ denote the probability that a permutation obtained
after the iteration of $r$ $b$-shuffles (or equivalently a single
$b^r$-shuffle) has descent set $S$, and let $\widetilde{\pi}(S)$ denote
the chance that a uniformly chosen random permutation has descent set $S$.
Theorem \ref{up} uses symmetric function theory to upper bound the total
variation distance between $\widetilde{P^r}$ and $\widetilde{\pi}$.
Chapter 7 of the text \cite{sta99} provides background on the concepts
used in the proof of Theorem \ref{up} (i.e. Young tableaux, the RSK
correspondence, and symmetric functions). In \cite{AD} it is shown that
the descent set is a Markov chain when cards are repeatedly $b$-shuffled.
\begin{theorem} \label{up}
Let $r= \lceil \log_b(cn) \rceil$ with $c>0$. Then with the notation of
Theorem \ref{thm3.1},
\begin{equation*}
||\widetilde{P^r}-\widetilde{\pi}||_{TV} \leq \dfrac12
\sqrt{e^{1/(2c^2)}-1}.
\end{equation*}
\end{theorem}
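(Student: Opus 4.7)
The plan is to apply the standard $L^2$ (chi-square) bound
\[
4\|\widetilde{P^r}-\widetilde{\pi}\|_{\tv}^2 \leq \chi^2 := \sum_S \frac{(\widetilde{P^r}(S)-\widetilde{\pi}(S))^2}{\widetilde{\pi}(S)} = \sum_S \frac{\widetilde{P^r}(S)^2}{\widetilde{\pi}(S)} - 1,
\]
and then collapse the sum over descent sets $S$ to a sum over partitions $\lambda \vdash n$ via the RSK correspondence, finishing with a Cauchy identity for Schur polynomials.

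Under RSK, $\sigma \leftrightarrow (P,Q)$ with $D(\sigma)=D(Q)$ and $d(\sigma^{-1})=d(P)$. Combined with \eqref{12}, which shows that the probability of $\sigma$ after a $b^r$-shuffle depends only on $d(\sigma^{-1})$, this gives
\[
\widetilde{P^r}(S) = \sum_{\lambda \vdash n} f^\lambda_S\, g_{b^r}(\lambda), \qquad n!\,\widetilde{\pi}(S) = \sum_{\lambda \vdash n} f^\lambda_S\, f^\lambda,
\]
where $f^\lambda_S$ is the number of standard Young tableaux of shape $\lambda$ with descent set $S$, and $g_b(\lambda):=b^{-n}\sum_{T \in \mathrm{SYT}(\lambda)}\binom{n+b-d(T)-1}{n}$. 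By Stanley's principal specialization of the fundamental-quasisymmetric expansion of Schur functions, $b^n g_b(\lambda)=s_\lambda(1^b)$, i.e.\ the number of semistandard Young tableaux of shape $\lambda$ with entries in $\{1,\dots,b\}$.

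A weighted Cauchy--Schwarz with weights $\sqrt{f^\lambda_S f^\lambda}$ and $\sqrt{f^\lambda_S/f^\lambda}\,g_{b^r}(\lambda)$ gives
\[
\widetilde{P^r}(S)^2 \leq n!\,\widetilde{\pi}(S)\sum_\lambda \frac{f^\lambda_S}{f^\lambda}\, g_{b^r}(\lambda)^2.
\]
Dividing by $\widetilde{\pi}(S)$, summing over $S$, and using $\sum_S f^\lambda_S = f^\lambda$ collapses the bound to $\chi^2 \leq n!\sum_\lambda g_{b^r}(\lambda)^2 - 1$. Specializing the Cauchy identity $\sum_\lambda s_\lambda(x)s_\lambda(y)=\prod(1-x_iy_j)^{-1}$ at $a$ copies of $u$ and $b$ copies of $v$ and using homogeneity yields $\sum_{\lambda \vdash n} s_\lambda(1^a) s_\lambda(1^b) = \binom{ab+n-1}{n}$; taking $a=b=b^r$,
\[
n!\sum_\lambda g_{b^r}(\lambda)^2 = \frac{n!\binom{b^{2r}+n-1}{n}}{b^{2rn}} = \prod_{i=0}^{n-1}\!\left(1+\frac{i}{b^{2r}}\right) \leq \exp\!\left(\frac{n(n-1)}{2b^{2r}}\right).
\]
For $r=\lceil\log_b(cn)\rceil$ we have $b^{2r}\geq c^2n^2$, so $\chi^2 \leq e^{1/(2c^2)}-1$, whence the stated total-variation bound.

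The main obstacle is choosing the weights in the Cauchy--Schwarz step so that $\widetilde{\pi}(S)$ cancels cleanly from the denominator and the $f^\lambda_S$'s sum to $f^\lambda$ over $S$, leaving a pure partition sum amenable to the Cauchy identity; once this collapse is set up correctly, the Schur evaluation and the exponential estimate are routine.
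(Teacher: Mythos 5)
Your proof is correct and follows essentially the same route as the paper's: both reduce the descent-set distributions to the RSK shape distributions via the counts $f^\lambda_S$, collapse the sum over $S$ using $\sum_S f^\lambda_S=f^\lambda$, and evaluate $n!\sum_{\lambda}s_\lambda(1/b^r,\dots,1/b^r)^2-1$ with the Cauchy identity and the same exponential estimate. The only difference is organizational: you invoke the chi-square bound and a weighted Cauchy--Schwarz inside the sum over $S$, whereas the paper first applies the triangle inequality to the $L^1$ sum and then Cauchy--Schwarz against Plancherel measure, but both arrive at the identical intermediate quantity $\tfrac12\sqrt{n!\sum_\lambda s_\lambda(1/b^r,\dots,1/b^r)^2-1}$.
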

\begin{proof} We use the RSK correspondence which associates to a
  permutation $\sigma$ a pair of standard Young tableaux $(P,Q)$ called the
  insertion and recording tableau of $\sigma$ respectively. One says that a
  standard Young tableau $T$ has a descent at $i$ ($1 \leq i \leq
  n-1$) if $i+1$ is in a row lower than $i$ in $T$. We let $d(T)$
  denote the number of descents of $T$. By Lemma 7.23.1 of
  \cite{sta99}, the descent set of $\sigma$ is equal to the descent set of
  $Q(\sigma)$. This implies that
\begin{equation*}
\widetilde{\pi}(S) = \sum_{|\lambda|=n} \frac{f_{\lambda}(S)
 f_{\lambda}}{n!},
\end{equation*} where $f_{\lambda}$ is the number of standard Young
tableaux of shape $\lambda$, and $f_{\lambda}(S)$ is the number of
standard Young tableaux of shape $\lambda$ with descent set $S$.

From Theorem 3 of \cite{ful02}, the chance that $Q(\sigma)=T$ (for
$\sigma$ obtained from a $b^r$ shuffle) is
$s_{\lambda}(\frac{1}{b^r},\dots,\frac{1}{b^r})$ for any standard Young
tableau $T$. Here $b^r$ coordinates of the Schur function $s_{\lambda}$
are equal to $\frac{1}{b^r}$ and the rest are $0$. Thus,
\begin{align*}
||\widetilde{P^r}-\widetilde{\pi}||_{TV} &= \dfrac12 \sum_{S \subseteq
 \{1,\dots,n-1\}} \left| \widetilde{P^r}(S)-\widetilde{\pi}(S) \right|\\
&= \dfrac12 \sum_S\left| \sum_{|\lambda|=n} \left[ f_{\lambda}(S)
s_{\lambda}\left(\frac{1}{b^r},\dots,\frac{1}{b^r}\right) -
\frac{f_{\lambda}(S) f_{\lambda}}{n!} \right] \right| \\
&\leq \dfrac12 \sum_S \sum_{|\lambda|=n} \left| f_{\lambda}(S)
 s_{\lambda}\left(\frac{1}{b^r},\dots,\frac{1}{b^r}\right) - \frac{f_{\lambda}(S) f_{\lambda}}{n!} \right| \\
&= \dfrac12 \sum_{|\lambda|=n} \left|
s_{\lambda}\left(\frac{1}{b^r},\dots, \frac{1}{b^r}\right) - \frac{f_{\lambda}}{n!} \right| \sum_{S} f_{\lambda}(S) \\
&= \dfrac12 \sum_{|\lambda|=n} \left| f_{\lambda}
 s_{\lambda}\left(\frac{1}{b^r},\dots,\frac{1}{b^r}\right) - \frac{f_{\lambda}^2}{n!} \right|.
\end{align*}

By the Cauchy--Schwarz inequality, this is at most
\begin{align*}
& \dfrac12 \sqrt{ \sum_{|\lambda|=n}
\left[s_{\lambda}\left(\frac{1}{b^r},\dots,\frac{1}{b^r}\right)-\frac{f_{\lambda}}{n!} \right]^2
\sum_{|\lambda|=n} f_{\lambda}^2 } \\
& \quad = \dfrac12 \sqrt{ n! \sum_{|\lambda|=n}
\left[s_{\lambda}\left(\frac{1}{b^r},\dots,\frac{1}{b^r}\right)-\frac{f_{\lambda}}{n!} \right]^2 }.
\end{align*}
The functions $f_{\lambda}
s_{\lambda}(\frac{1}{b^r},\dots,\frac{1}{b^r})$ and
$\frac{f_{\lambda}^2}{n!}$ both define probability measures on the set
of partitions of size $n$; the first is the distribution on RSK shapes
after a $b^r$ riffle shuffle \cite{sta01}, and the second is known as
Plancherel measure. Hence the previous expression simplifies to
\begin{equation*}
\dfrac12 \sqrt{n! \sum_{|\lambda|=n}
s_{\lambda}\left(\frac{1}{b^r},\dots,\frac{1}{b^r} \right)^2 -1 }.
\end{equation*}
Let $[u^n] f(u)$ denote the coefficient of $u^n$ in a series $f(u)$. By
the Cauchy identity for Schur functions \cite[p.~322]{sta99},
\begin{align*}
\sum_{|\lambda|=n}s_{\lambda}\left(\frac{1}{b^r},\dots,\frac{1}{b^r}\right)^2
& = [u^n]\sum_{|\lambda| \geq 0} s_{\lambda}\left(\frac{u}{b^r},\dots,\frac{u}{b^r}\right)
s_{\lambda}\left(\frac{1}{b^r},\dots,\frac{1}{b^r}\right) \\
& = [u^n] \left( 1-\frac{u}{b^{2r}} \right)^{-b^{2r}} \\
& = b^{-2rn} {b^{2r}+n-1 \choose n}.
\end{align*}
Thus
\begin{equation*}
n!\sum_{|\lambda|=n} s_{\lambda}\left(\frac{1}{b^r},\dots,\frac{1}{b^r}\right)^2 - 1 =
\prod_{i=1}^{n-1} \left( 1+\frac{i}{b^{2r}} \right) -1 .
\end{equation*}
Since $\log(1+x) \leq x$ for $x>0$, it follows that
\begin{equation*}
\log \left(\prod_{i=1}^{n-1} \left( 1+\frac{i}{b^{2r}} \right) \right) =
\sum_{i=1}^{n-1} \log \left( 1+\frac{i}{b^{2r}} \right)
\leq \frac{{n \choose 2}}{b^{2r}}.
\end{equation*}
Thus
\begin{equation*}
\prod_{i=1}^{n-1} \left(1+\frac{i}{b^{2r}} \right) -1 \leq \exp \left(\frac{{n \choose
2}}{b^{2r}} \right) - 1.
\end{equation*}

Summarizing, it has been shown that
\begin{equation*}
||\widetilde{P^r}-\widetilde{\pi}||_{TV} \leq \dfrac12 \sqrt{ \exp
\left(\frac{{n \choose 2}}{b^{2r}} \right) - 1}.
\end{equation*}
If $b^r=cn$ with $c>0$, then $\frac{{n \choose 2}}{b^{2r}} \leq
\frac{1}{2c^2}$, which proves the result.
\end{proof}
\begin{proof}[Proof of Theorem \ref{thm3.1}]
Theorem \ref{direct} showed that the base-$b$ carries chain started from
$0$ is the same as the chain for the number of descents after successive
$b$-shuffles started from the identity. Thus Theorem \ref{up} also upper
bounds the total variation distance between $r$ iterations of the base-$b$
carries chain (started from $0$) and its stationary distribution.
\end{proof}

Next we give a different approach to proving convergence using stochastic
monotonicity and also give a lower bound. The arguments show that
$\log_bn+c$ steps suffice for convergence and that $\tfrac12\log_bn$ steps
are not enough.
\begin{theorem}\label{newthm3.2}
For $n \geq 3$, any starting state $i$, and any $r \geq 0$, the Markov
chain $P$ of \eqref{11} satisfies
\begin{equation*}
\|P^r(i,\cdot)-\pi\|_{\tv}\leq\left(\frac{n-1}{2}+i\right)\bigg/b^r.
\end{equation*}
Conversely, for any $\epsilon$, $0<\epsilon<1$, if $1 \leq r \leq \log_b
\left[ \frac{\epsilon \left| i-\frac{n-1}{2} \right|}{\sqrt{n}} \right]$,
then
\begin{equation*}
\|P^r(i,\cdot)-\pi\|_{\tv}\geq1-\epsilon.
\end{equation*}
\end{theorem}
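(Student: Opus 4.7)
The plan rests on Holte's identity $P_b^r=P_{b^r}$, which reduces both bounds to analyzing a single step of the base-$B$ carries chain with $B:=b^r$. One step advances $y$ to $\lfloor(y+S)/B\rfloor$, where $S=\sum_{k=1}^n Y_k$ and $Y_1,\dots,Y_n$ are i.i.d.\ uniform on $\{0,1,\dots,B-1\}$. The key observation used throughout is that $S\bmod B$ is uniform on $\{0,1,\dots,B-1\}$: each $Y_k\bmod B$ is uniform, and the sum of independent mod-$B$-uniform variables is again mod-$B$-uniform.

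For the upper bound I would use a monotone coupling. Draw $X_0'\sim\pi$ independent of $S$ and set $X_1=\lfloor(i+S)/B\rfloor$ and $X_1'=\lfloor(X_0'+S)/B\rfloor$, so $X_1\sim P^r(i,\cdot)$ and $X_1'\sim\pi$ by stationarity. Conditional on $X_0'=x$, the event $X_1\neq X_1'$ requires some multiple of $B$ to lie in the interval with endpoints $x+S$ and $i+S$; by the uniformity of $S\bmod B$ this occurs for exactly $\min(|i-x|,B)$ of the $B$ residues, giving $\mathbb{P}(X_1\neq X_1'\mid X_0'=x)\leq|i-x|/B$. Averaging over $x\sim\pi$, using $|i-x|\leq i+x$ together with the symmetry-derived identity $\mathbb{E}_\pi[X]=(n-1)/2$, yields $\mathbb{P}(X_1\neq X_1')\leq(i+(n-1)/2)/b^r$, and the coupling inequality closes this half.

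For the lower bound I would compare first and second moments. Using the uniformity of $(i+S)\bmod B$, a direct computation gives $\mathbb{E}_i[X_r]=(n-1)/2+(i-(n-1)/2)/b^r$, while $\mathbb{E}_\pi[X]=(n-1)/2$, so $|\mathbb{E}_i[X_r]-\mathbb{E}_\pi[X]|=|i-(n-1)/2|/b^r$. The stationary variance is the classical Eulerian variance $\text{Var}_\pi(X)=(n+1)/12$, and writing $X_r=(i+S-t)/B$ with $t=(i+S)\bmod B$ uniform, a variance decomposition with Cauchy--Schwarz applied to $\text{Cov}(S,t)$ bounds $\text{Var}_i(X_r)$ by a constant multiple of $n$. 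Under the hypothesis $b^r\leq\epsilon|i-(n-1)/2|/\sqrt n$ the squared mean gap is at least $n/\epsilon^2$, so Chebyshev applied on each side of the threshold $(\mathbb{E}_i[X_r]+\mathbb{E}_\pi[X])/2$ yields
\[
\|P^r(i,\cdot)-\pi\|_{\tv}\;\geq\;1-\frac{4(\text{Var}_i(X_r)+\text{Var}_\pi(X))}{|\mathbb{E}_i[X_r]-\mathbb{E}_\pi[X]|^2}\;\geq\;1-\epsilon.
\]

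The main obstacle I expect is the variance estimate. One can show $\text{Var}_i(X_r)\to(n+1)/12$ as $r\to\infty$, but the lower bound requires a bound uniform in $r\geq 1$, which forces careful treatment of the covariance $\text{Cov}(S,(i+S)\bmod B)$ between the digit-sum and its residue. Once this constant is pinned down (for $n\geq 3$), the Chebyshev tail probabilities collapse into the clean $1-\epsilon$.
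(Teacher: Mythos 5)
Your overall strategy is sound, and for the lower bound it is essentially the one the paper uses; the upper bound is where you genuinely diverge. The paper proves stochastic monotonicity via the ``add the same digits to both copies'' coupling and then invokes the first eigenfunction $f_1(i)=i-\frac{n-1}{2}$ together with Theorem~2.1 of \cite{pd170}; you instead run that same coupling directly against a stationary start and estimate $\mathbb{P}(X_1\neq X_1')\leq \mathbb{E}_\pi\left[\,|i-X_0'|\,\right]/b^r\leq\left(i+\frac{n-1}{2}\right)/b^r$, using that $S\bmod b^r$ is uniform. This is correct (your count of $\min(|i-x|,B)$ bad residues is right, and the bound $|i-x|/B$ survives even when $|i-x|\geq B$), and it is more self-contained than the paper's citation. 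Your mean computation $\mathbb{E}_i[X_r]=\frac{n-1}{2}+\left(i-\frac{n-1}{2}\right)/b^r$ is also correct; it is precisely the statement that $f_1$ is an eigenfunction with eigenvalue $1/b$.

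The gap is the one you flagged: the variance estimate. Cauchy--Schwarz on $\mathrm{Cov}(S,t)$ gives only $\mathrm{Var}_i(X_r)\leq(\sqrt{n}+1)^2/12$, so $4\left(\mathrm{Var}_i(X_r)+\mathrm{Var}_\pi(X)\right)\leq(2n+2\sqrt{n}+2)/3$, while your Chebyshev step needs this quantity to be at most $n/\epsilon$; for $\epsilon$ near $1$ this fails for $3\leq n\leq 7$ (e.g.\ $n=3$ gives $(8+2\sqrt{3})/3\approx 3.82>3$). So as written the argument does not cover the full range $n\geq 3$, $0<\epsilon<1$ claimed in the theorem. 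The paper closes exactly this hole with Holte's second eigenfunction: $f_2(i)=i^2-(n-1)i+\frac{(n-2)(3n-1)}{12}$ has eigenvalue $1/b^2$ and satisfies $f_1^2=f_2+\frac{n+1}{12}$, whence $\mathbb{E}_i[f_1(X_r)^2]=f_2(i)b^{-2r}+\frac{n+1}{12}$ and $\mathrm{Var}_i(X_r)=\frac{n+1}{12}\left(1-b^{-2r}\right)$ exactly. Then $4\left(\mathrm{Var}_i+\mathrm{Var}_\pi\right)\leq\frac{2(n+1)}{3}\leq n$ and Chebyshev yields $1-\epsilon^2\geq 1-\epsilon$. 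This is the same computation the paper packages as $\int(f_1(x)-f_1(y))^2P(x,dy)=\left(1-\frac{1}{b}\right)^2f_1^2(x)+\frac{n+1}{12}\left(1-\frac{1}{b^2}\right)$ before invoking Theorem~2.3 of \cite{pd170}; if you replace your covariance estimate by this eigenfunction identity, your proof goes through in full.
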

\begin{proof} Recall that a Markov chain on $\{0,1,\dots,n-1\}$ is
  \textit{stochastically monotone} if for all $i\leq i'$,
  $P(i,\{0,\dots,j\})\geq P(i',\{0,\dots,j\})$ for all $j$. We
  show that $P$ is stochastically monotone by coupling. Consider two
  copies of the carries chain, one at $i$ and one at $i'$ with
  $i\leq i'$. Each chain proceeds by adding $n$ random base-$b$
  digits. Couple them by adding the \textit{same} digits to both. If
  the first process results in a carry of $k$, the second process
  results in a carry of $k$ or $k+1$. This implies stochastic
  monotonicity.

From Holte \cite[Th.~4]{holte} and the fact that $n \geq 3$, the right
eigenfunctions for eigenvalues $\frac{1}{b}$, $\frac{1}{b^2}$ can be taken
as
\begin{equation*}
f_1(i)=i-\frac{n-1}{2},\qquad f_2(i)=i^2-(n-1)i+\frac{(n-2)(3n-1)}{12}.
\end{equation*}

The upper bound follows from stochastic monotonicity and the first
eigenvector via \cite[Th.~2.1]{pd170}. For the lower bound, note that
$f_1^2=f_2+A$, with $A=\frac{n+1}{12}$. This, and a simple computation
show that
\begin{equation*}
\int\left(f_1(x)-f_1(y)\right)^2P(x,dy)=
\left(1-\frac{1}{b}\right)^2f_1^2(x)+ A\left(1-\frac{1}{b^2}\right).
\end{equation*}
This is the required input for the lower bound, using
\cite[Th.~2.3]{pd170}. One obtains that
$\|P^r(i,\cdot)-\pi\|_{\tv}\geq1-\epsilon$ for $r \leq \log_b \left[
\frac{\epsilon \left| i-\frac{n-1}{2} \right|}{\sqrt{8(n+1)/12}} \right]$,
and the result follows since $\frac{8(n+1)}{12} \leq n$ when $n \geq 3$.
\end{proof}
\begin{rem}
  The argument for stochastic monotonicity does not depend on the
  assumption that the digits are uniform and independently
  distributed. Any joint distribution within a column (with columns
  independent) leads to a stochastically monotone Markov chain. In
  \cite{pd173} it is shown that the transition matrix $P$ is totally
  positive of order $2$. This implies stochastic monotonicity via
  \cite[Prop.~1.3.1, p.~22]{kar}.
\end{rem}

To close this section, we prove that $\frac{1}{2} \log_b(n)+c$ steps are
sufficient for total variation convergence when $n$ is large.

\begin{theorem} \label{sharp} With the notation of Theorem \ref{thm3.1},
there is a constant $B>0$ (independent of $n,b,c>1$) such that for
$r=\frac{1}{2} \log_b(nc)$, \[ ||P^r_0-\pi||_{TV} \leq \frac{B}{\sqrt{n}}
+ \frac{B}{\sqrt{c}}.\] \end{theorem}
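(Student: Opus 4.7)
The plan is to approximate both $P_0^r$ and $\pi$ by Gaussian distributions and then compare those Gaussians directly, exploiting the fact that the $\tv$ distance between two Gaussians of common variance $\sigma^2$ and means differing by $\delta$ is of order $\delta/\sigma$ (rather than $\delta$). From the proof of Theorem~\ref{stat}, $P_0^r$ is the law of $\lfloor T - E \rfloor$ and, by Lemma~\ref{hyper}, $\pi$ is the law of $\lfloor T \rfloor$, where $T = X_1 + \cdots + X_n$ is a sum of iid continuous uniforms on $[0,1]$ and $E = b^{-r}\sum_{i=1}^n (U_i - \lfloor U_i \rfloor)$ with $U_i = b^r X_i$. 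At $r = \tfrac{1}{2}\log_b(nc)$ we have $b^r = \sqrt{nc}$, so $E$ is concentrated near $\tfrac{1}{2}\sqrt{n/c}$ with standard deviation of order $1/\sqrt{c}$.

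The proof proceeds in two steps. A local CLT for the Irwin--Hall density, using the rapid decay of its Fourier transform $\bigl|\sin(t/2)/(t/2)\bigr|^n$, shows that both $\lfloor T \rfloor$ and $\lfloor T - E \rfloor$ lie within $O(1/\sqrt{n})$ in total variation of Gaussians matching their true means and variances --- namely $N\bigl(\tfrac{n-1}{2}, \tfrac{n+1}{12}\bigr)$ for the former, and a Gaussian with mean shifted by $\tfrac{1}{2}\sqrt{n/c}$ and variance altered by a factor $1 + O(1/c)$ for the latter. The two Gaussians differ in mean by $\tfrac{1}{2}\sqrt{n/c}$ and have standard deviation $\sigma \asymp \sqrt{n}$; the $\tv$ distance between two Gaussians of equal variance $\sigma^2$ with means $\delta$ apart is at most $\delta/(\sigma\sqrt{2\pi})$, which here gives $O(1/\sqrt{c})$, with the variance mismatch contributing a further $O(1/c)$. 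Combining the two error sources yields $\|P_0^r - \pi\|_{\tv} \leq B/\sqrt{n} + B/\sqrt{c}$, as required.

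The main obstacle is the local CLT as applied to $\lfloor T - E \rfloor$, since $T$ and $E$ are dependent (both functions of the same $U_i$). One resolution is to condition on the fractional parts of the $U_i$ (which determine $E$): given these, $T$ remains a sum with the same Fourier-decay properties, so the local CLT still applies at the same rate, and one then integrates over $E$. An alternative avoiding this representation is a spectral approach: Holte's right eigenvectors $f_k$ of the carries chain are polynomials of degree $k$ which, under the Gaussian approximation of $\pi$, evaluate at $0$ approximately like Hermite polynomials at $-\sqrt{3n}$, giving $f_k(0)^2 \approx (12n)^k/k!$; the $\chi^2$ bound $\sum_{k\geq 1} b^{-2kr} f_k(0)^2$ then sums to $e^{12/c} - 1 = O(1/c)$ at $b^r = \sqrt{nc}$, which via $\|\cdot\|_{\tv} \leq \tfrac{1}{2}\sqrt{\chi^2}$ yields the $O(1/\sqrt{c})$ piece, with $O(1/\sqrt{n})$ corrections coming from the Gaussian approximation of $\pi$ itself.
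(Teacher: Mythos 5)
Your first route is essentially correct but structurally different from the paper's argument, and the difference is worth spelling out. The paper does \emph{not} compare the two laws in total variation via a local CLT; instead it first invokes the single-crossing property (Theorem 4.3 of \cite{pd173}: $P^r(0,j)-\pi(j)$ changes sign exactly once in $j$) to collapse $\|P^r_0-\pi\|_{\tv}$ to a difference of distribution functions at one point, i.e.\ to a Kolmogorov distance. That reduction is what lets the paper get away with the ordinary Berry--Esseen theorem (twice, each contributing $O(1/\sqrt{n})$) plus a Stein-identity bound $\sup_z|\Phi(z)-\Phi(\sigma z+\mu)|\leq|\sigma^2-1|+|\mu|\sqrt{2\pi}/4$ for the $O(1/\sqrt{c})$ term. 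You bypass the single-crossing input entirely by controlling total variation directly, which forces you to upgrade Berry--Esseen to a local CLT with a TV rate. Your resolution of the dependence between $T$ and $E$ is the right one and in fact lands you back on the paper's representation: conditioning on the fractional parts of the $U_i$ (for integer $b^r$) makes $\lfloor U_i\rfloor$ an independent discrete uniform on $\{0,\dots,b^r-1\}$, so $T-E=b^{-r}\sum_i\lfloor U_i\rfloor$ is exactly the paper's $b^{-r}\sum_i Y_i$. What remains is a lattice local CLT for sums of discrete uniforms with error constants uniform in the span $b^r$ (both uniform laws are symmetric, so the first Edgeworth correction vanishes and $O(1/\sqrt{n})$ in TV is attainable); this is standard but is genuinely more work than Berry--Esseen, and you should not wave it through in one sentence. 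The payoff is that you need nothing from \cite{pd173}. The Gaussian-versus-Gaussian step is fine: the standardized mean shift is $\sqrt{3/c}$ (matching the paper's $\mu=-\sqrt{3/c}$) and the relative variance mismatch is $1/(nc)$, not $O(1/c)$ as you wrote, which only helps you.

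Your ``alternative'' spectral route, however, has a genuine gap: the $\chi^2$ bound $4\|P^r_x-\pi\|_{\tv}^2\leq\sum_{k\geq1}\beta_k^{2r}f_k(x)^2$ with $f_k$ the $L^2(\pi)$-normalized right eigenfunctions is a theorem about \emph{reversible} chains, and the carries chain is not reversible (the whole point of Theorem \ref{identify} is that $P$ is the formal time reversal of a different chain, not of itself). For a non-reversible diagonalizable chain the cross terms $\sum_y g_k(y)g_l(y)/\pi(y)$ do not vanish for $k\neq l$, so the clean sum over squared eigenfunction values is not available without further argument. Unless you supply a substitute (e.g.\ a singular-value or $PP^*$ bound), that route does not close; treat it as a heuristic confirming the $\sqrt{c}$ scale rather than a proof.
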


\begin{proof} From Theorem 4.3 of \cite{pd173}, there is $j^* \in
\{0,1,\cdots,n-1 \}$ such that $P^r(0,j) \geq \pi(j)$ for $0 \leq j \leq
j^*$, and that $P^r(0,j) \leq \pi(j)$ for $j^*+1 \leq j \leq n-1$. Thus
\begin{equation} \label{1} ||P^r_0-\pi||_{TV} = P^r_0 \{0,1,\cdots,j^*\} -
\pi \{0,1,\cdots,j^*\}.\end{equation} From the proof of Theorem
\ref{stat}, $P^r(0,j) = \mathbb{P} \left(jb^r \leq \sum_{i=1}^n Y_i <
(j+1)b^r \right)$ with $Y_i$ i.i.d. uniform on $\{0,1,\cdots,b^r-1\}$.
From Lemma \ref{hyper}, $\pi(j) = \mathbb{P} \left( j \leq \sum_{i=1}^n
U_i < j+1 \right)$ with $U_i$ i.i.d. uniform on $[0,1]$. Thus
\[ P^r_0(j) = \mathbb{P} \left( \lfloor \frac{1}{b^r} \sum_{i=1}^n Y_i \rfloor=j
\right) {\rm and} \ P^r_0 \{0,1,\cdots,j^*\} = \mathbb{P} \left(
\frac{1}{b^r} \sum_{i=1}^n Y_i < j^*+1 \right),\]
\[ \pi(j) = \mathbb{P} \left( \lfloor \sum_{i=1}^n U_i \rfloor =j \right) {\rm
and} \ \pi \{0,1,\cdots,j^*\} = \mathbb{P} \left( \sum_{i=1}^n U_i < j^*+1
\right).\]

From the above considerations, we have
\begin{equation} \label{2} ||P^r_0 - \pi||_{TV} \leq \sup_x \left| \mathbb{P}
\left( \frac{1}{b^r} \sum_{i=1}^n Y_i < x \right) - \mathbb{P} \left(
\sum_{i=1}^n U_i <x \right) \right|. \end{equation} Let
$\mu_n=\frac{n}{2}, \sigma_n^2=\frac{n}{12}$ and $\nu_n=\frac{n}{2}
\frac{b^r-1}{b^r}, \tau_n^2=\frac{n}{12} \frac{b^{2r}-1}{b^{2r}}$. The
right-hand side of \eqref{2} is
\begin{eqnarray*}
& & \sup_x \left| \mathbb{P} \left[ \frac{\left( \frac{1}{b^r}
\sum_{i=1}^n Y_i - \mu_n \right)}{\sigma_n} < \frac{(x-\mu_n)}{\sigma_n}
\right] - \mathbb{P} \left[ \frac{\left( \sum_{i=1}^n U_i - \mu_n \right)}
{\sigma_n} <
\frac{(x-\mu_n)}{\sigma_n} \right] \right|\\
& \leq & \sup_y \left| \mathbb{P} \left[ \frac{\left( \frac{1}{b^r}
\sum_{i=1}^n Y_i - \mu_n \right)}{\sigma_n} < y \right] - \Phi(y) \right|
+ \sup_y \left| \mathbb{P} \left[ \frac{\left( \sum_{i=1}^n U_i - \mu_n
\right)}{\sigma_n}
< y \right] - \Phi(y) \right| \\
& = & I + II. \end{eqnarray*} Here $\Phi(y) = \frac{1}{\sqrt{2 \pi}}
\int_{-\infty}^y e^{-t^2/2} dt$ denotes the cumulative distribution
function of the normal distribution.

From the usual Berry-Esseen bound, $II \leq B_1/\sqrt{n}$ with $B_1$
involving the second and third moments of the uniform on $[0,1]$,
uniformly bounded. Rewrite $I$ as \begin{eqnarray*} & & \sup_y \left|
\mathbb{P} \left[ \frac{ \left( \frac{1}{b^r} \sum_{i=1}^n Y_i - \nu_n
\right)}{\tau_n} < \frac{(\sigma_n
y + \mu_n - \nu_n)}{\tau_n} \right] - \Phi(y) \right| \\
& \leq & \sup_z \left| \mathbb{P} \left[ \frac{\left( \frac{1}{b^r} \sum_i
Y_i - \nu_n \right)}{\tau_n} \leq z \right] - \Phi(z) \right| + \sup_z
\left| \Phi(z) - \Phi(a_1 z + a_2) \right| \end{eqnarray*} with $a_1
=\tau_n/\sigma_n$, $a_2 = (\nu_n - \mu_n) / \sigma_n$. Using the
Berry-Esseen bound again, the first term is bounded above by
$B_2/\sqrt{n}$ with $B_2$ involving the ratio \[ \mathbb{E} \left|
\frac{Y_1}{b^r} - \frac{b^r-1}{2b^r} \right|^3 \left/ \left( \mathbb{E}
\left| \frac{Y_1}{b^r} - \frac{b^r-1}{2b^r} \right|^2 \right)^{3/2}.
\right.\] This is uniformly bounded in $b,n,c$. To bound the final term,
we use the following inequality: for any $\mu \in \mathbb{R}, \sigma^2 \in
\mathbb{R}^+$, \begin{equation} \label{3} \sup_z \left| \Phi(z) -
\Phi(\sigma z + \mu) \right| \leq |\sigma^2-1| + |\mu| \frac{\sqrt{2
\pi}}{4}. \end{equation}

An elegant proof of \eqref{3} using Stein's identity was communicated by
Sourav Chatterjee. Let $W$ be Normal$(\mu,\sigma^2)$ and $Z$ be
Normal$(0,1)$. For any bounded $f$ with a bounded, piecewise continuous
derivative, $\mathbb{E}(Wf(W)) = \mu \mathbb{E}(f(W)) + \sigma^2
\mathbb{E}(f'(W))$ (Stein's identity being used). Thus \[
\mathbb{E}(Wf(W)-f'(W)) = \mu \mathbb{E}(f(W)) + (\sigma^2-1)
\mathbb{E}(f'(W)).\] As in \cite[p.~22]{Stn}, choose $f_{w_0}$ so that for
all $w$, one has \begin{equation} \label{4} wf_{w_0}(w) - f_{w_0}'(w) =
\delta_{w \leq w_0} - \Phi(w_0). \end{equation} Here $w_0$ is fixed. Stein
shows that $|f_{w_0}(w)| \leq \frac{\sqrt{2 \pi}}{4}$ for all $w$, and
that $|f_{w_0}'(w)| \leq 1$ for all $w$. Taking expectations in \eqref{4}
proves \eqref{3}. Taking $\sigma^2 = \left( 1-\frac{1}{b^{2r}} \right)$,
$\mu=- \frac{\sqrt{3n}}{b^r} = -\sqrt{ \frac{3}{c}}$, it follows that \[
\sup_z \left| \Phi(z) - \Phi(a_1 z + a_2) \right| \leq B_3 / \sqrt{c} \]
with $B_3$ independent of $n,b,c>1$.
\end{proof}
\begin{rem}
  If $W$ is Normal$(\nu,\tau^2)$ and $Z$ is Normal$(\mu,\sigma^2)$, the
  bound \eqref{3} shows that the Kolmogorov distance between their
  distributions is at most \[ \min \left( \frac{|\mu-\nu|}{\tau}
  \frac{\sqrt{2 \pi}}{4} + \left|\frac{\sigma^2}{\tau^2}-1 \right|,
\frac{|\mu-\nu|}{\sigma}
  \frac{\sqrt{2 \pi}}{4} + \left|\frac{\tau^2}{\sigma^2}-1 \right|
  \right).\]
\end{rem}

\section{Signed Permutations}\label{sec4}

Let $B_n$, the hyperoctahedral group, be represented as signed
permutations. Thus $B_n$ has $2^n n!$ elements. We associate elements of
$B_n$ to arrangements of a deck of $n$ cards with cards allowed to be face
up or face down. A natural analog of the Gilbert--Shannon--Reeds shuffling
model was studied in \cite{pd92}; the deck is cut approximately in half,
the top half turned face up, and the two halves are riffled together
according to the G--S--R prescription.  These shuffles have similarly neat
combinatorial properties which allow sharp analysis of mixing times. Of
course, shuffling is a natural algebraic operation and type $B$ shuffles
have been studied from an algebraic viewpoint (with applications to
Hochschild homology) in \cite{ber}, \cite{nantel}, and \cite{ful01}. This
section develops a corresponding carries process in rough parallel with
\ref{sec2}. We also give an application to the theory of rounding.

From the previous sections, we see that the key idea is to use the fact
that an $a$-shuffle followed by a $b$-shuffle is equivalent to an
$ab$-shuffle. A hyperoctahedral analog of $(2a+1)$-shuffles was considered
in \cite{ber} (see also \cite{ful01} for connections with the affine Weyl
group). A $(2a+1)$-shuffle is defined by multinomially cutting the deck
into $2a+1$ piles, then flipping over the even numbered piles, and
riffling them together.

View $B_n$ as the signed permutations on $n$ symbols, using the linear
ordering \[ 1<2<\dots<n<-n<\dots<-2<-1.\] Say that
\begin{enumerate}
\item $\sigma$ has a descent at position $i$ $(1 \leq i \leq n-1)$ if
$\sigma(i) > \sigma(i+1)$.
\item $\sigma$ has a descent at position $n$ if $\sigma(n)<0$.
\end{enumerate}
For example, $-1 \ -2 \ -3\in B_3$ has three descents. Let
$\overline{A(n,j)}$ denote the number of elements of $B_n$ with $j$
descents. The Bergerons \cite{ber} give analogs of basic properties of
riffle shuffles. More precisely, they show that if a Markov chain on the
hyperoctahedral group begins at the identity and proceeds by successive
independent $(2b+1)$-shuffles, then
\begin{itemize}
\item The chance of obtaining the signed permutation $\tau$ after $r$ steps
is \begin{equation} \label{41} \frac{{n+
\frac{(2b+1)^r-1}{2}-d(\tau^{-1})\choose n}}{(2b+1)^{rn}}.
\end{equation}
\item A $(2a+1)$-shuffle followed by a
$(2b+1)$-shuffle is equivalent to a $(2a+1)(2b+1)$-shuffle.
\end{itemize} Using these gives a type $B$ analog of Proposition \ref{gen}. Gessel also
has an unpublished proof of Proposition \ref{genB} using $P$-partitions.
\begin{prop} \label{genB} Let $\sigma \in B_n$ have $d$ descents. Let
  $c_{ij}^d$ be the number of ordered pairs $(\tau,\mu)$ of elements
  of $B_n$ such that $\tau$ has $i$ descents, $\mu$ has $j$ descents,
  and $\tau \mu = \sigma$. Then
\begin{equation*}
\sum_{i,j \geq 0} \frac{c_{ij}^d s^i t^j}{(1-s)^{n+1} (1-t)^{n+1}} =
\sum_{a,b \geq 0} {n+2ab+a+b-d \choose n} s^a t^b .
\end{equation*}
\end{prop}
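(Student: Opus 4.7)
The plan is to mimic the proof of Proposition \ref{gen} verbatim, substituting the type $B$ ingredients. First I would specialize \eqref{41} to $r = 1$: the chance that a single $(2a+1)$-shuffle produces $\mu \in B_n$ is $\binom{n + a - d(\mu^{-1})}{n}/(2a+1)^n$. Combining this with the composition law -- a $(2a+1)$-shuffle followed by a $(2b+1)$-shuffle is a $(2a+1)(2b+1)$-shuffle, i.e.\ a $(2c+1)$-shuffle with $c = 2ab + a + b$, since $(2a+1)(2b+1) - 1 = 2(2ab + a + b)$ -- yields the group-algebra identity
\begin{equation*}
\sum_{\mu \in B_n} \binom{n + a - d(\mu)}{n}\, \mu^{-1} \cdot \sum_{\tau \in B_n} \binom{n + b - d(\tau)}{n}\, \tau^{-1} = \sum_{\sigma \in B_n} \binom{n + 2ab + a + b - d(\sigma)}{n}\, \sigma^{-1},
\end{equation*}
after clearing the common denominator $((2a+1)(2b+1))^n$.

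Next, I would fix $\sigma \in B_n$ with $d(\sigma) = d$, extract the coefficient of $\sigma^{-1}$ on both sides, multiply by $s^a t^b$, and sum over $a, b \geq 0$. The right-hand side becomes immediately $\sum_{a, b \geq 0} \binom{n + 2ab + a + b - d}{n}\, s^a t^b$, which is the stated target. On the left-hand side, the coefficient of $\sigma^{-1}$ is a sum over pairs $(\tau, \mu)$ with $\tau\mu = \sigma$. Using the generating function identity
\begin{equation*}
\sum_{a \geq 0} s^a \binom{n + a - d}{n} = \frac{s^d}{(1 - s)^{n+1}} \qquad (0 \leq d \leq n)
\end{equation*}
in both the $a$- and $b$-variables, each such pair contributes $s^{d(\mu)} t^{d(\tau)}/[(1-s)^{n+1}(1-t)^{n+1}]$. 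Regrouping pairs by their descent counts $(i, j)$ collapses the total to $\sum_{i, j \geq 0} c_{ij}^d\, s^i t^j/[(1-s)^{n+1}(1-t)^{n+1}]$, which matches the left side of the proposition.

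There is no real obstacle here; the argument is a direct transcription of the type $A$ proof of Proposition \ref{gen}. The only bookkeeping points worth flagging are (i) the identity $(2a+1)(2b+1) = 2(2ab + a + b) + 1$, which is what makes the polynomial $2ab + a + b$ appear naturally on the right-hand side, and (ii) the shift in the generating function above produces $s^d$ rather than $s^{d+1}$ -- this is precisely why the target monomials are $s^i t^j$, not $s^{i+1} t^{j+1}$ as in Proposition \ref{gen}.
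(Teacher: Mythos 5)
Your proposal is correct and follows essentially the same route as the paper: specialize \eqref{41} to $r=1$, use the composition rule $(2a+1)(2b+1)=2(2ab+a+b)+1$ to get the group-algebra identity, then multiply by $s^a t^b$, sum, and extract the coefficient of $\sigma^{-1}$ using $\sum_{a\geq 0}\binom{n+a-d}{n}s^a = s^d/(1-s)^{n+1}$. The paper's proof is exactly this, stated more tersely by deferring to the proof of Proposition \ref{gen}.
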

\begin{proof} Since a $(2a+1)$-shuffle followed by a $(2b+1)$-shuffle is
equivalent to a $(2a+1)(2b+1)$-shuffle, the $r=1$ case of \eqref{41} gives
that \[ \sum_{\mu \in B_n}{n+a-d(\mu) \choose n} \mu^{-1} \cdot \sum_{\tau
\in B_n} {n+b-d(\tau)\choose n} \tau^{-1} =  \sum_{\sigma \in
B_n}{n+2ab+a+b-d(\sigma) \choose n} \sigma^{-1}.\] As in the proof of
Proposition \ref{gen}, one multiplies both sides by $s^a t^b$, sums over
all $a,b \geq 0$, and then takes the coefficient of $\sigma^{-1}$ on both
sides to obtain the result.
\end{proof}

Next we define a ``type $B$'' carries process, to which we will relate
the type $B$ hyperoctahedral shuffle. This is defined as the usual
carries process, where one adds $n$ length $m$ numbers base $2b+1$,
and to these adds the length $m$ number $(b,b,\dots,b)$. Note that the
state space of the type $B$ carries chain is $\{0,1,\dots,n\}$ (for
usual carries, the most one can carry is $n-1$). For example when
$b=1$ (so $2b+1=3$), adding $222$ and $201$ followed by appending
$111$ gives
\begin{equation*}
\begin{array}{llll}
{\it 2}& {\it 1} & {\it 1}& \\
 &2&2&2\\
 &2&0&1\\
 &1&1&1\\ \hline
2&0&1&1
\end{array}
\end{equation*}
with carries $\kappa_0=0, \kappa_1=1, \kappa_2=1, \kappa_3=2$.

\begin{theorem} \label{transB}
For $0\leq i$, $j\leq n$,
\begin{enumerate}
\item The transition probabilities of the type $B$ carries chain are
\begin{equation*}
P(i,j) = \frac{1}{(2b+1)^n} \sum_{l \geq 0} (-1)^l {n+1 \choose l}
{n+(j-l)(2b+1)+b-i \choose n} .
\end{equation*}

\item The $r$-step transition probabilities of the type $B$ carries
  chain are
\begin{equation*}
P^r(i,j) = \frac{1}{(2b+1)^{rn}} \sum_{l \geq 0} (-1)^l {n+1 \choose l}
{n+(j-l)(2b+1)^r+\frac{(2b+1)^r-1}{2}-i \choose n} .
\end{equation*} (i.e. one replaces $2b+1$ by $(2b+1)^r$ in part
1).
\end{enumerate}
\end{theorem}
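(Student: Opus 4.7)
The plan is to handle the two parts separately: part (1) by a direct generating-function computation, and part (2) by observing that $r$ iterations of the chain at parameter $b$ amount to one step of an analogous chain at the parameter $b' = \tfrac{(2b+1)^r - 1}{2}$.

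For part (1), I would start from the probabilistic description: given current carry $i$, the next carry is $\lfloor (i + b + X_1 + \cdots + X_n)/(2b+1) \rfloor$ with $X_k$ i.i.d.\ uniform on $\{0,1,\dots,2b\}$. Hence
\[
(2b+1)^n P(i,j) \;=\; F\bigl((j+1)(2b+1)-i-b\bigr)\;-\;F\bigl(j(2b+1)-i-b\bigr),
\]
where $F(T) = \#\{x \in \{0,\dots,2b\}^n : x_1+\cdots+x_n < T\}$. Reading off coefficients of $\frac{(1-x^{2b+1})^n}{(1-x)^{n+1}}$ gives
\[
F(T) = \sum_{l\geq 0} (-1)^l \binom{n}{l}\binom{T-1-l(2b+1)+n}{n}.
\]
In the difference $F((j+1)(2b+1)-i-b) - F(j(2b+1)-i-b)$ I would reindex one of the two sums by $l \mapsto l-1$ so that the large binomial coefficients have matching argument; Pascal's rule $\binom{n}{l-1}+\binom{n}{l}=\binom{n+1}{l}$ then collapses the two sums into the formula claimed in part (1).

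For part (2), I would exploit the following self-similarity. Set $B = 2b+1$. Let $X_k^{(t)}$, for $t=0,\dots,r-1$ and $k=1,\dots,n$, be i.i.d.\ uniform on $\{0,\dots,B-1\}$ (the base-$B$ digits consumed during the $r$ iterations). The total number added across the $r$ positions, over and above the initial carry $i$, equals
\[
\sum_{t=0}^{r-1}\Bigl(b+\sum_{k=1}^n X_k^{(t)}\Bigr)B^t \;=\; \frac{B^r-1}{2} \;+\; \sum_{k=1}^n Y_k,
\]
where $Y_k := \sum_{t=0}^{r-1} X_k^{(t)}B^t$ is uniform on $\{0,1,\dots,B^r-1\}$. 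Hence the carry out of position $r$ is $\lfloor(i+(B^r-1)/2+\sum_k Y_k)/B^r\rfloor$, which is exactly one step of the type $B$ carries chain at the new parameter $b':=(B^r-1)/2$, started from state $i$. Applying part (1) with $b$ replaced by $b'$ (so that $2b'+1 = B^r = (2b+1)^r$) produces the formula in part (2).

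The main obstacle is the combinatorial bookkeeping inside part (1): tracking the reindexing $l\mapsto l-1$ and the $l=-1$ boundary term carefully enough that Pascal's identity assembles cleanly into $\binom{n+1}{l}$. Part (2), by contrast, is conceptually transparent once one notices that the deterministic ``digit'' $b$ in base $2b+1$ is precisely what makes the process self-similar under iteration, because $(b,b,\dots,b)$ read in base $2b+1$ equals $\tfrac{(2b+1)^r-1}{2}$, which is again of the form $(\text{new base}-1)/2$; this preserves the type $B$ structure and reduces the $r$-step formula to the $1$-step formula at a larger parameter.
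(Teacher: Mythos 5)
Your proposal is correct, and its overall strategy matches the paper's: part (1) by a direct generating-function computation starting from $P(i,j)=\mathbb{P}(j(2b+1)-b\leq i+X_1+\dots+X_n\leq j(2b+1)+b)$, and part (2) by observing that $r$ steps at base $2b+1$ (with the appended digit $b$ in each column) coincide with one step at base $(2b+1)^r$ with appended number $\tfrac{(2b+1)^r-1}{2}$, which is again of type $B$ form. The only mechanical difference is in part (1): you difference two cumulative counts $F(T)$ coming from $(1-x^{2b+1})^n/(1-x)^{n+1}$ and assemble $\binom{n+1}{l}$ via Pascal's rule after a shift of index, whereas the paper introduces one extra auxiliary uniform $Y$ on $\{0,\dots,2b\}$ to rewrite the interval probability as $(2b+1)$ times a point probability for a sum of $n+1$ uniforms, so that the exponent $n+1$ in $\bigl((1-x^{2b+1})/(1-x)\bigr)^{n+1}$ produces $\binom{n+1}{l}$ directly with no telescoping; both routes are valid and yield the same formula (in your reindexing, the shift is $l\mapsto l+1$ in the subtracted sum, with the $\binom{n}{-1}=0$ convention handling the boundary).
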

\begin{proof} From the definition of the type $B$ carries chain,
\begin{equation*}
P(i,j)=\mathbb{P}\left( j(2b+1)-b\leq i+X_1+\dots+X_n\leq j(2b+1)+b\right)
\end{equation*}
where $X_1,\dots,X_n$ are independent and identically distributed discrete
uniform random variables in $\{0,1,\cdots,2b\}$.  Equivalently,
\begin{equation*}
P(i,j)=(2b+1) \cdot \mathbb{P}\left(i+X_1+\dots+X_n+Y=j(2b+1)+b\right),
\end{equation*}
where $X_1,\dots,X_n,Y$ are i.i.d. discrete uniforms in
$\{0,1,\cdots,2b\}$. Letting $[x^h] f(x)$ denote the coefficient of $x^h$
in a series $f(x)$, it follows that
\begin{align*}
P(i,j) & = \frac{1}{(2b+1)^n} \left[x^{j(2b+1)+b-i}\right]
\left( \frac{1-x^{2b+1}}{1-x} \right)^{n+1} \\
& = \frac{1}{(2b+1)^n} \sum_{l \geq 0} (-1)^l {n+1 \choose l}
\left[x^{(j-l)(2b+1)+b-i}\right] \left( \frac{1}{1-x} \right)^{n+1} \\
& = \frac{1}{(2b+1)^n} \sum_{l \geq 0} (-1)^l {n+1 \choose l}
{n+(j-l)(2b+1)+b-i \choose n}.
\end{align*}
Thus the first part is proved.

To prove the second half of the theorem, we show that $r$ steps of the
base-$(2b+1)$ carries chain is equivalent to one step of the base
$(2b+1)^r$ carries chain. To compute the carry after $r$ steps of the type
$B_n$ carries chain, add $b \ ( 1 + (2b+1) + \dots + (2b+1)^{r-1})$ to the
sum of $n$ length $r$ numbers base $2b+1$. To compute the carry after one
step of the type $B_n$ base $(2b+1)^r$ carries chain, add
$\frac{(2b+1)^r-1}{2}$ to the sum of $n$ length $1$ numbers base
$(2b+1)^r$. These computations are equivalent, so the result follows by
replacing $2b+1$ by $(2b+1)^r$ in part 1.
\end{proof}

Now we relate hyperoctahedral shuffles to type $B$ carries. In what
follows, $\pi$ denotes the distribution on $\{0,1,\dots,n\}$ defined by
$\pi(j)= \frac{\overline{A(n,j)}}{2^n n!}$.
\begin{theorem} \label{identifyB} Let a Markov chain on the
  hyperoctahedral group $B_n$ begin at the identity and proceed by
  successive independent $(2b+1)$-shuffles. Then the number of descents
  of $\tau^{-1}$ forms a Markov chain, and its formal time reversal
  with respect to its stationary distribution $\pi$ is identical with
  the carries Markov chain.
\end{theorem}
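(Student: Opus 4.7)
The plan is to follow the proof of Theorem \ref{identify} step-by-step, replacing each type $A$ ingredient by its type $B$ counterpart: formula \eqref{41} for the distribution of $\tau$ after $r$ iterated $(2b+1)$-shuffles, the composition law that a $(2a+1)$-shuffle followed by a $(2b+1)$-shuffle is a $(2a+1)(2b+1)$-shuffle, Proposition \ref{genB}, and the explicit type $B$ carries transition from Theorem \ref{transB}(1). Since \eqref{41} depends on $\tau$ only through $d(\tau^{-1})$, the process $d(\tau_r^{-1})$ is Markov (paralleling Corollary 2 of \cite{pd92}), and because $\tau_r$ tends to uniform on $B_n$ the stationary distribution is $\pi(j)=\overline{A(n,j)}/(2^n n!)$.

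Next, I would compute $\mathbb{P}(d(\tau_r^{-1})=j\mid d(\tau_{r-1}^{-1})=i)$ by writing $\tau_r=\mu\sigma$ with $\sigma=\tau_{r-1}$ and $\mu$ a fresh $(2b+1)$-shuffle. Using the $r=1$ case $\binom{n+b-d(\mu^{-1})}{n}/(2b+1)^n$ of \eqref{41} for $\mu$, grouping by $(d(\mu^{-1}),d(\sigma^{-1}\mu^{-1}))=(k,j)$, and invoking the claim implicit in Proposition \ref{genB} that $c_{ik}^j$ depends on $\sigma$ only through $d(\sigma)$, the sum over $\sigma$ with $d(\sigma^{-1})=i$ collapses, yielding
\[
\mathbb{P}\!\left(d(\tau_r^{-1})=j\mid d(\tau_{r-1}^{-1})=i\right)=\frac{\overline{A(n,j)}}{\overline{A(n,i)}}\cdot\frac{1}{(2b+1)^n}\sum_{k\geq 0}c_{ik}^j\binom{n+b-k}{n}.
\]

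Finally, I would evaluate the inner sum via Proposition \ref{genB}. Writing $\binom{n+b-k}{n}=[t^b]\,t^k/(1-t)^{n+1}$, multiplying by $(1-s)^{n+1}/(1-s)^{n+1}$, and extracting $[s^i]$, the generating-function identity (with its dummy variable renamed to avoid collision with the base $b$) rewrites the sum as $\sum_{l\geq 0}(-1)^l\binom{n+1}{l}\binom{n+(i-l)(2b+1)+b-j}{n}$, after simplifying $2(i-l)b+(i-l)+b-j=(i-l)(2b+1)+b-j$. By Theorem \ref{transB}(1) this equals $(2b+1)^n P(j,i)$ for the type $B$ carries transition $P$, so the conditional probability is $\pi(j)P(j,i)/\pi(i)$, exactly the formal time reversal. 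The principal obstacle is clerical: type $B$ offsets lack the ``$-1$'' that pervades type $A$ but acquire an extra ``$+b$'' from the row $(b,b,\dots,b)$ appended in the carries definition, so one must carefully track these shifts and confirm that after coefficient extraction the indices $i$ and $j$ occupy the slots demanded by $P^*(i,j)=\pi(j)P(j,i)/\pi(i)$.
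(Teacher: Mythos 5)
Your proposal is correct and follows essentially the same route as the paper: derive the transition probabilities of $d(\tau_r^{-1})$ by mimicking Theorem \ref{identify} with the Bergerons' formula \eqref{41}, rewrite the resulting sum $\frac{\overline{A(n,j)}}{\overline{A(n,i)}(2b+1)^n}\sum_k c_{ik}^j\binom{n+b-k}{n}$ via coefficient extraction and Proposition \ref{genB} (with the dummy index renamed, exactly as the paper does), and match the outcome $\sum_l(-1)^l\binom{n+1}{l}\binom{n+(i-l)(2b+1)+b-j}{n}$ against Theorem \ref{transB}(1) to identify it with $\pi(j)P(j,i)/\pi(i)$. The index bookkeeping you flag as the main hazard works out precisely as you describe.
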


\begin{proof} Let $\tau_r$ be the element of $B_n$ obtained after $r$
  independent $b$-shuffles (started at the identity). Arguing as in
  the proof of Theorem \ref{identify} gives that $d(\tau_r^{-1})$ forms a
  Markov chain with transition probabilities
\begin{equation*}
\mathbb{P}\left( d(\tau_r^{-1})=j | d(\tau_{r-1}^{-1})=i\right) =
\frac{\overline{A(n,j)}}{\overline{A(n,i)} (2b+1)^n} \sum_{k \geq 0}
c_{ik}^j {n+b-k \choose n} .
\end{equation*}
Here $c_{ik}^j$ is as in the statement of Proposition \ref{genB}.

Letting $[x^h] f(x)$ denote the coefficient of $x^h$ in a series
$f(x)$, the transition probability in the previous paragraph can be
written as
\begin{align*}
& [t^b] \frac{\overline{A(n,j)}}{\overline{A(n,i)} (2b+1)^n} \sum_{k \geq
0}
c_{ik}^j \frac{t^k}{(1-t)^{n+1}} \\
& \qquad = \left[t^b s^i\right] \frac{\overline{A(n,j)}}{\overline{A(n,i)}
(2b+1)^n} (1-s)^{n+1} \sum_{i,k \geq 0} c_{ik}^j \frac{s^i
t^k}{(1-s)^{n+1} (1-t)^{n+1}}.
\end{align*}
By Proposition \ref{genB} this is equal to
\begin{align*}
& [t^b s^i] \frac{\overline{A(n,j)}}{\overline{A(n,i)} (2b+1)^n}
(1-s)^{n+1}
\sum_{a,c \geq 0} {n+2ac+a+c-j \choose n} s^a t^c \\
& \quad\qquad = \frac{\overline{A(n,j)}}{\overline{A(n,i)} (2b+1)^n}
\sum_{l \geq 0}
(-1)^l {n+1 \choose l} \left[t^b\right] \sum_{c \geq 0} {n+(i-l)(2c+1)+c-j \choose n}t^c \\
& \quad\qquad = \frac{\overline{A(n,j)}}{\overline{A(n,i)} (2b+1)^n}
\sum_{l \geq 0} (-1)^l {n+1 \choose l} {n+(i-l)(2b+1)+b-j \choose n}.
\end{align*}
Comparing with Theorem \ref{transB}, this is equal to $\frac{\pi(j)
  P(j,i)}{\pi(i)}$, as needed.
\end{proof}

The next theorem is easily proved by the technique used to prove Theorem
\ref{direct} (using Proposition \ref{genB} instead of Proposition
\ref{gen}).

\begin{theorem} \label{analogo} The chance that the type $B$ carries
chain goes from $0$ to $j$ in $r$ steps is equal to the chance that an
element of $B_n$ obtained by performing $r$ successive $(2b+1)$-shuffles
(started at the identity) has $j$ descents.
\end{theorem}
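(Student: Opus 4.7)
The plan is to port the argument of Theorem \ref{direct} to the type $B$ setting, substituting Proposition \ref{genB} for Proposition \ref{gen} and the hyperoctahedral shuffle distribution \eqref{41} for \eqref{12}.

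First I would write down the chance that $r$ successive $(2b+1)$-shuffles started at the identity yield an element of $B_n$ with exactly $j$ descents. Since the composition rule says that $r$ iterated $(2b+1)$-shuffles equal one $(2B+1)$-shuffle with $B = \frac{(2b+1)^r - 1}{2}$, formula \eqref{41} gives this probability as
\[
\sum_{i \geq 0} \frac{c_{ij}^0}{(2b+1)^{rn}} \binom{n + \frac{(2b+1)^r - 1}{2} - i}{n},
\]
where $c_{ij}^0$ is the number of $\sigma \in B_n$ with $d(\sigma^{-1}) = i$ and $d(\sigma) = j$. Here I use that the identity is the unique element of $B_n$ with zero descents in the type $B$ ordering (no descent at positions $1,\dots,n-1$ forces monotonically increasing entries, and no descent at position $n$ forces the final entry to be positive), so the pairs $(\tau,\mu) \in B_n^2$ with $\tau\mu$ of zero descents are precisely $(\sigma,\sigma^{-1})$, matching the $d=0$ interpretation in Proposition \ref{genB}.

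Second, I would specialize Proposition \ref{genB} to $d = 0$ and take the coefficient of $s^{((2b+1)^r - 1)/2}$ on both sides. The key observation is that the exponent $2ad + a + d$ in Proposition \ref{genB} equals $\frac{(2a+1)(2d+1) - 1}{2}$, so with $a = \frac{(2b+1)^r - 1}{2}$ it collapses to $(2b+1)^r d + \frac{(2b+1)^r - 1}{2}$. I would then multiply through by $(1-t)^{n+1}$ and extract the coefficient of $t^j$: on the left this yields $\sum_i c_{ij}^0 \binom{n + \frac{(2b+1)^r - 1}{2} - i}{n}$, and on the right the binomial expansion of $(1-t)^{n+1}$ produces an alternating sum over $l$.

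Finally, dividing by $(2b+1)^{rn}$ and comparing with the closed form for $P^r(0,j)$ in Theorem \ref{transB}(2), the two expressions agree term-by-term. The argument is conceptually a direct transcription of the proof of Theorem \ref{direct}; the main obstacle is bookkeeping around two differences from the symmetric-group case. Proposition \ref{genB} uses $s^i t^k$ in place of the $s^{i+1} t^{k+1}$ of Proposition \ref{gen}, so the extraction indices shift by one throughout, and the exponent $\frac{(2b+1)^r - 1}{2}$ must be threaded consistently through both the shuffle probability and the carries transition formula. I expect that checking the $r=1$ case first, where the type $B$ analog of \eqref{ed} reduces to Theorem \ref{transB}(1), will be the simplest way to validate the index conventions before committing to the general $r$ computation.
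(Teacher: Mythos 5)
Your proposal is correct and is exactly the route the paper intends: the paper states only that Theorem \ref{analogo} ``is easily proved by the technique used to prove Theorem \ref{direct} (using Proposition \ref{genB} instead of Proposition \ref{gen}),'' and you have carried out precisely that transcription, with the index shifts (the $s^i t^k$ versus $s^{i+1}t^{k+1}$ conventions, and the exponent identity $2ad+a+d=\frac{(2a+1)(2d+1)-1}{2}$ collapsing at $a=\frac{(2b+1)^r-1}{2}$) handled correctly so that the result matches Theorem \ref{transB}(2) at $i=0$.
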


The following corollary is immediate from Theorem \ref{analogo}.
\begin{cor} \label{statB} The stationary distribution of the type $B$
  carries chain is given by $\pi(j)= \frac{\overline{A(n,j)}}{2^n n!}$,
  where $\overline{A(n,j)}$ is the number of signed permutations on $n$
  symbols with $j$ descents.
\end{cor}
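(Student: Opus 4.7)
The plan is to take $r \to \infty$ in Theorem \ref{analogo}. The type $B$ carries chain is a finite-state Markov chain on $\{0,\dots,n\}$, and one checks routinely from the transition formula of Theorem \ref{transB}(1) that it is irreducible and aperiodic; hence it has a unique stationary distribution $\pi$, characterized by $\pi(j) = \lim_{r\to\infty} P^r(0,j)$. By Theorem \ref{analogo}, $P^r(0,j)$ equals the probability that $r$ iterated $(2b+1)$-shuffles of the identity in $B_n$ produce a signed permutation with $j$ descents.

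Since $r$ iterated $(2b+1)$-shuffles have the same distribution as a single $(2b+1)^r$-shuffle, the explicit weight in \eqref{41} assigns to each $\tau \in B_n$ probability $\binom{n + \tfrac{(2b+1)^r - 1}{2} - d(\tau^{-1})}{n} / (2b+1)^{rn}$, and each such weight tends to $1/(2^n n!)$ as $r \to \infty$ (standard binomial asymptotic: $\binom{N+c}{n}/N^n \to 1/n!$ applied with $N \sim (2b+1)^r/2$ and bounded shift $c$, after accounting for the overall normalization $(2b+1)^{rn} = 2^n N^n$). Thus the iterated shuffle measure converges to the uniform measure on $B_n$, under which the chance of having exactly $j$ descents is $\overline{A(n,j)}/(2^n n!)$ by definition of $\overline{A(n,j)}$. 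This yields $\pi(j) = \overline{A(n,j)}/(2^n n!)$.

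The only mild obstacle is the binomial asymptotic justifying convergence to uniform, but it is routine and essentially identical to the computation at the end of Section \ref{sec2} for the classical case. A purely analytic alternative is to send $r \to \infty$ directly in the closed form of Theorem \ref{transB}(2): the limit $(2b+1)^{-rn}\binom{n + (j-l)(2b+1)^r + \tfrac{(2b+1)^r-1}{2} - i}{n} \to (j - l + \tfrac12)^n / n!$ is independent of $i$ and gives $\pi(j) = \tfrac{1}{n!} \sum_{l \geq 0} (-1)^l \binom{n+1}{l} (j - l + \tfrac12)^n$, a known Euler-type closed form for $\overline{A(n,j)}/2^n$.
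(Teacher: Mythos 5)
Your proof is correct, and it follows the route the paper itself signals: the corollary is introduced as being ``immediate from Theorem \ref{analogo}'', and your argument --- let $r \to \infty$, use irreducibility and aperiodicity to identify $\lim_{r} P^r(0,j)$ with $\pi(j)$, and use the explicit weights \eqref{41} to see that the iterated $(2b+1)$-shuffle measure tends to the uniform distribution on $B_n$ --- is exactly that deduction with the details filled in. The paper's written-out proof takes a closely related but distinct path through Theorem \ref{identifyB} instead of Theorem \ref{analogo}: a distribution is stationary for a transition matrix if and only if it is stationary for its formal time reversal, and the inverse-descent chain of iterated type $B$ shuffles visibly has stationary law $\overline{A(n,j)}/(2^n n!)$ because the shuffle walk converges to the uniform distribution on $B_n$; the carries chain, identified in Theorem \ref{identifyB} as the time reversal of that chain, therefore inherits the same stationary distribution. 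Both arguments rest on the same underlying fact (convergence of iterated shuffles to uniform on $B_n$), but yours additionally requires the routine irreducibility/aperiodicity check to pass from the limit of $P^r(0,\cdot)$ to the stationary distribution, a step the time-reversal route sidesteps. One caution about your ``purely analytic alternative'': within this paper the identity $\overline{A(n,j)} = \sum_{l\ge 0}(-1)^l\binom{n+1}{l}(2j-2l+1)^n$ is Corollary \ref{exp}, which is itself deduced \emph{from} Corollary \ref{statB}, so that variant is non-circular only if you import the identity from an external source, as you implicitly do by calling it known.
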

Corollary \ref{exp} gives a closed formula for $\overline{A(n,j)}$. (This
can also be obtained by combining Proposition \ref{round} below with
equation (19) of \cite{cha}).

\begin{cor} \label{exp}
\[ \overline{A(n,j)}= \sum_{l=0}^j (-1)^l {n+1 \choose l}
\left( 2j-2l+1 \right)^n.\]
\end{cor}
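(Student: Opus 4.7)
The plan is to identify $\overline{A(n,j)}/(2^n n!)$ with $\pi(j)$ via Corollary \ref{statB}, and then compute $\pi(j)$ as the $r \to \infty$ limit of the explicit $r$-step transition formula from Theorem \ref{transB}(2). This is the hyperoctahedral analogue of the analytic derivation of the Eulerian number formula given at the end of \ref{sec2}, and so the overall strategy should carry over almost verbatim.

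Concretely, I would take $i = 0$ in Theorem \ref{transB}(2) and write $B = (2b+1)^r$ for brevity, obtaining
\[
P^r(0,j) \;=\; \frac{1}{B^{n}} \sum_{l \geq 0} (-1)^l \binom{n+1}{l} \binom{n+(j-l)B + \tfrac{B-1}{2}}{n}.
\]
For each fixed $l$ with $0 \leq l \leq j$, the upper argument of the second binomial coefficient is a polynomial in $B$ of degree $n$ with leading coefficient $(j-l+\tfrac12)^n$, so
\[
\lim_{r \to \infty} \frac{1}{B^{n}} \binom{n + (j-l)B + \tfrac{B-1}{2}}{n} \;=\; \frac{(j-l+\tfrac12)^n}{n!} \;=\; \frac{(2j-2l+1)^n}{2^n\, n!}.
\]
For $l > j$, the upper argument becomes a negative integer once $B$ is sufficiently large, and the binomial vanishes by the convention inherited from the coefficient extraction in the proof of Theorem \ref{transB}(1), where $[x^k](1-x)^{-(n+1)} = 0$ for $k < 0$. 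Hence only finitely many terms contribute and one may pass to the limit term by term, yielding
\[
\pi(j) \;=\; \lim_{r \to \infty} P^r(0,j) \;=\; \frac{1}{2^n\, n!} \sum_{l=0}^{j} (-1)^l \binom{n+1}{l}(2j-2l+1)^n.
\]
Combining with the identification $\pi(j) = \overline{A(n,j)}/(2^n n!)$ from Corollary \ref{statB} and clearing denominators gives the stated closed form.

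The main point requiring attention is the truncation of the sum at $l = j$: if one instead uses the polynomial interpretation of binomial coefficients with negative upper index, the $l > j$ terms are nonzero and the limit no longer matches. The cleanest resolution is to appeal directly to the generating-function origin of the formula, where the binomial $\binom{n+(j-l)B+(B-1)/2}{n}$ is the coefficient of a monomial $x^{(j-l)B+(B-1)/2}$ in $(1-x)^{-(n+1)}$ and is therefore identically zero whenever this exponent is negative. Once this convention is pinned down, the rest of the proof is routine.
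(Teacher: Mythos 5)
Your proof is correct and takes essentially the same approach as the paper, whose entire argument for this corollary is the one line ``Let $r \rightarrow \infty$ in part 2 of Theorem \ref{transB}, and apply Corollary \ref{statB}.'' You have simply supplied the details of that limit, including the right observation that the $l>j$ terms vanish under the coefficient-extraction convention behind Theorem \ref{transB}.
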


\begin{proof} Let $r \rightarrow \infty$ in part 2 of Theorem
\ref{transB}, and apply Corollary \ref{statB}. \end{proof}

As an application of the above results, we give a new proof of the
following lovely fact from \cite{schm} (see also Section 9 of \cite{cha}
for closely related results). Note that it can be interpreted as computing
the chance that the sum of $n$ i.i.d. uniforms on $[0,1]$, when rounded to
the nearest integer, is equal to $j$.
\begin{prop} \label{round} Let $U_1,\dots,U_n$ be independent,
  identically distributed continuous uniform random variables in
  $[0,1]$. Then \[ \mathbb{P} \left(j-\frac{1}{2} \leq U_1+\cdots+U_n \leq
  j+\frac{1}{2} \right) = \frac{\overline{A(n,j)}}{2^n n!}.\]
\end{prop}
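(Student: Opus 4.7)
The plan is to imitate the proof of Theorem \ref{stat} in the type $B$ setting, using Corollary \ref{statB} as the bridge. Since $\pi(j)=\overline{A(n,j)}/(2^n n!)$ is the stationary distribution of the type $B$ carries chain, and the chain is irreducible and aperiodic (already clear from its positive transition matrix), $P^r(0,j)\to\pi(j)$ as $r\to\infty$ for any fixed base $2b+1$. So it suffices to exhibit a different limit expression for $\lim_r P^r(0,j)$ and match it against the left-hand side of the proposition.

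First I would rewrite $P^r(0,j)$ using the semigroup identity established inside the proof of Theorem \ref{transB}: $r$ steps of the base-$(2b+1)$ type $B$ chain equals one step of the base-$(2b+1)^r$ chain. Concretely this amounts to summing $n$ i.i.d.\ digits base $(2b+1)^r$ together with the additive constant $b+b(2b+1)+\cdots+b(2b+1)^{r-1}=\tfrac{(2b+1)^r-1}{2}$ and reading off the carry. This yields
\begin{equation*}
P^r(0,j)=\mathbb{P}\!\left(j(2b+1)^r-\tfrac{(2b+1)^r-1}{2}\leq Y_1+\cdots+Y_n\leq j(2b+1)^r+\tfrac{(2b+1)^r-1}{2}\right),
\end{equation*}
where $Y_1,\dots,Y_n$ are i.i.d.\ discrete uniforms on $\{0,1,\dots,(2b+1)^r-1\}$.

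Next I would rescale: set $X_i=Y_i/(2b+1)^r$ and divide the sandwich above by $(2b+1)^r$. As $r\to\infty$, $\tfrac{(2b+1)^r-1}{2(2b+1)^r}\to\tfrac12$, and (writing $U_i\sim\mathrm{Unif}[0,1]$ via the obvious coupling $U_i=(Y_i+V_i)/(2b+1)^r$ with auxiliary uniforms $V_i\in[0,1]$) we have $|X_i-U_i|\leq 1/(2b+1)^r$ a.s. Exactly as in the Slutsky step at the end of the proof of Theorem \ref{stat}, this gives
\begin{equation*}
\lim_{r\to\infty} P^r(0,j)=\mathbb{P}\!\left(j-\tfrac12\leq U_1+\cdots+U_n\leq j+\tfrac12\right).
\end{equation*}
Combining with the ergodic convergence $P^r(0,j)\to\pi(j)=\overline{A(n,j)}/(2^n n!)$ finishes the proof.

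The only delicate point is the distinction between strict and non-strict inequalities and the handling of the boundary. This is harmless: the density of $U_1+\cdots+U_n$ is continuous, so $\mathbb{P}(U_1+\cdots+U_n=j\pm\tfrac12)=0$, which makes the Portmanteau step work cleanly despite the closed intervals on both sides. A minor bookkeeping step I expect to have to check is the arithmetic $b\sum_{k=0}^{r-1}(2b+1)^k=\tfrac{(2b+1)^r-1}{2}$, which is what correctly identifies one step of the iterated chain with the base-$(2b+1)^r$ chain and produces the symmetric $1/2$-wide window in the limit.
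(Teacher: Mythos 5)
Your proposal is correct and follows essentially the same route as the paper: both compute the limit of the large-base one-step type $B$ transition probability $P(0,j)$ in two ways, once via the stationary distribution (Corollary \ref{statB}) and once by approximating the rescaled discrete uniform sum by $\sum U_i$ with a Slutsky argument; your choice to send $r\to\infty$ at fixed $b$ rather than $b\to\infty$ at $r=1$ is equivalent by the semigroup identity of Theorem \ref{transB}. (One tiny caveat: the transition matrix need not be strictly positive when $b<n$, but convergence $P^r(0,j)\to\pi(j)$ still holds, e.g.\ directly from the explicit formula in Theorem \ref{transB}, so nothing is lost.)
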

\begin{proof} Let $X_1,\dots,X_n$ be i.i.d. discrete uniforms on
  $\{0,1,\dots,2b\}$.  From the definition of the type $B_n$ base-$(2b+1)$
  carries chain,
\begin{equation*}
P(0,j)= \mathbb{P}\left( j(2b+1)-b\leq\sum_{i=1}^n X_i < j(2b+1)+b+1
\right) .
\end{equation*}
Let $Y_1,\dots,Y_n$ be i.i.d. continuous uniforms on $[0,2b+1]$. Then it
follows that
\begin{align*}
P(0,j)& = \mathbb{P}\left( j(2b+1)-b \leq\sum_{i=1}^n
\lfloor Y_i \rfloor < j(2b+1)+b+1 \right) \\
& = \mathbb{P} \left( (2b+1)\left(j-\dfrac12\right) \leq \sum_{i=1}^n Y_i -
\sum_{i=1}^n \left(Y_i -\lfloor Y_i \rfloor\right) - \dfrac12< (2b+1)\left(j+\dfrac12\right) \right) \\
& = \mathbb{P}\left( j -\dfrac12\leq\sum_{i=1}^n U_i - E < j +\dfrac12 \right).
\end{align*}
Here the $U_i = \frac{Y_i}{2b+1}$ are i.i.d. continuous uniforms on
$[0,1]$, and
\begin{equation*}
E = \frac{1}{2b+1}
\sum_{i=1}^n \left(Y_i - \lfloor Y_i \rfloor\right) + \frac{1}{2(2b+1)}.
\end{equation*}

Note that when $n$ is fixed and $b \to\infty$, $E$ converges to $0$
with probability $1$. Thus Slutsky's theorem implies that
\begin{equation*}
\lim_{b \to\infty} P(0,j) = \mathbb{P} \left( j - \dfrac12 \leq
\sum_{i=1}^n U_i < j + \dfrac12 \right).
\end{equation*} However by Theorem
\ref{transB} and Corollary \ref{statB},
\begin{equation*}
\lim_{b \to\infty} P(0,j) = \pi(j) = \frac{\overline{A(n,j)}}{2^n n!},
\end{equation*}
which completes the proof.
\end{proof}

\section{Two Final Topics}\label{sec5}

The carries matrix also comes up in studying sections of generating
functions via the Veronese map. The large $n$ limit of the carries
\textit{process} is well approximated by a classical auto-regressive
process.

\subsection{Eulerian polynomials and Hilbert series of Veronese
  subrings}\label{sub51}

Some natural sequences $a_k,\,0\leq k<\infty$ have generating functions
\begin{equation*}
\sum_{k=0}^\infty a_k x^k=\frac{h(x)}{(1-x)^{n+1}}
\end{equation*}
with $h(x)=h_0+h_1x+\dots+h_{n+1}x^{n+1}$ a polynomial of degree at most
$n+1$. Suppose we are interested in every $b$th term $\{a_{bk}\},\ 0\leq
k<\infty$. It is not hard to see that
\begin{equation*}
\sum_{k=0}^\infty a_{bk}x^k=\frac{h^{<b>}(x)}{(1-x)^{n+1}}
\end{equation*}
for another polynomial $h^{<b>}(x)$ of degree at most $n+1$. The study of
these generating functions arises naturally in algebraic geometry
\cite{eis} and lattice point enumeration \cite{beck}.

Brenti and Welker \cite{bren} show that the $i$th coefficient of
$h^{<b>}(x)$ satisfies
\begin{equation*}
h_i^{<b>}=\sum_{j=0}^{n+1}C(i,j)h_j
\end{equation*}
with $C$ an $(n+2)\times(n+2)$ matrix with $(i,j)$ entry ($0\leq i,j\leq
n+1$) equal to the number of solutions to $a_1+\dots+a_{n+1}=ib-j$ where
$0\leq a_l \leq b-1$ are integers. In \cite{pd173} we show that the
$n\times n$ matrix given by deleting the first and last rows and columns
of $C$, then multiplying by $b^{-n}$ and taking the transpose is precisely
the carries matrix $(P(i,j))$ of \eqref{11}.

Since iterates of the carries chain converge, the matrix $C(i,j)$ has nice
limiting behavior. Brenti and Welker \cite{bren} show that the zeros of
$h^{\langle b\rangle}$ converge and Beck-Stapledon \cite{beck} show that
the zeros converge to the zeros of the $n$th Eulerian polynomial $p_n(x)$,
defined as $\sum_{j \geq 0} A(n,j) x^{j+1}$, where $A(n,j)$ is the number
of permutations in $S_n$ with $j$ descents. The following is a refinement.
\begin{theorem} \label{lim} Suppose that $h(1) \neq 0$ and let $p_n(x)$
be the $n$th Eulerian polynomial. Then as $b \rightarrow \infty$ with $n$
fixed,
\begin{equation*}
\frac{h^{<b>}(x)}{b^n \cdot h(1)} \longrightarrow \frac{p_n(x)}{n!}
\end{equation*}
\end{theorem}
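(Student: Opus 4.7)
The plan is to extract the coefficients $h_i^{\langle b\rangle}$ explicitly from $h^{\langle b\rangle}(x) = (1-x)^{n+1}\sum_{k\ge 0} a_{bk}x^k$ and study their $b\to\infty$ asymptotics one at a time. Expanding the prefactor $(1-x)^{n+1}$ yields
\begin{equation*}
h_i^{\langle b\rangle} \;=\; \sum_{l=0}^{\min(i,n+1)}(-1)^l\binom{n+1}{l}\,a_{b(i-l)} \qquad (0\le i \le n+1).
\end{equation*}

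To control $a_k$, use $1/(1-x)^{n+1}=\sum_k\binom{k+n}{n}x^k$, which gives $a_k = \sum_{j=0}^{n+1}h_j\binom{k+n-j}{n}$. For every $k\ge 1$ this equals a single polynomial $p(k)$ of degree $n$ in $k$ (the would-be corrections for $j>k$ vanish because $\binom{m}{n}=0$ for $0\le m\le n-1$), and the leading coefficient of $p$ is $(h_0+\dots+h_{n+1})/n! = h(1)/n!$, nonzero by hypothesis. The exceptional value $a_0=h_0$ appears only in the $l=i$ summand and is $O(1)$, so it disappears after dividing by $b^n$. For each $l\le i-1$ we have $a_{b(i-l)}=p(b(i-l))=\frac{h(1)}{n!}(b(i-l))^n + O(b^{n-1})$, hence
\begin{equation*}
\frac{h_i^{\langle b\rangle}}{b^n}\;\longrightarrow\; \frac{h(1)}{n!}\sum_{l=0}^{i-1}(-1)^l\binom{n+1}{l}(i-l)^n \;=\; \frac{h(1)\,A(n,i-1)}{n!},
\end{equation*}
by the Eulerian-number identity recalled at the end of \ref{sec2}. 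Since $p_n(x)=\sum_{j\ge 0}A(n,j)x^{j+1}$ has coefficient $A(n,i-1)$ on $x^i$, this is coefficient-by-coefficient convergence $h^{\langle b\rangle}(x)/(b^n h(1))\to p_n(x)/n!$ of polynomials of bounded degree, which is the desired limit.

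The main work is bookkeeping rather than any deep argument. The two mild snags are the boundary cases $i=0$ and $i=n+1$ (for $i=0$ the sum is empty and matches $A(n,-1)=0$; for $i=n+1$ Euler's identity evaluates to $A(n,n)=0$ since $S_n$-permutations have at most $n-1$ descents, killing the would-be $x^{n+1}$ leading term) and keeping track of the isolated $a_0$ contribution where the polynomial description of $a_k$ fails. A conceptually cleaner route in the spirit of the paper's carries theme uses Brenti--Welker's identity $h_i^{\langle b\rangle}=\sum_j C(i,j)h_j$ together with the fact from \cite{pd173} that $b^{-n}$ times the central $n\times n$ block of $C^T$ is the base-$b$ carries matrix $P_b$: as $b\to\infty$ each transition $P_b(i',j')$ converges to the stationary value $A(n,j')/n!$, and a direct asymptotic count shows $b^{-n}C(i,0)$ and $b^{-n}C(i,n+1)$ have the same limit, so $b^{-n}C(i,j)\to A(n,i-1)/n!$ uniformly in $j$; summing against $h_j$ delivers $h(1)A(n,i-1)/n!$ as before.
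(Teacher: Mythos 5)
Your argument is correct, and it reaches the conclusion by a decomposition that is organized differently from the paper's, though the two are computationally interchangeable. The paper starts from the Brenti--Welker identity $h_i^{\langle b\rangle}=\sum_j C(i,j)h_j$, evaluates $C(i,j)=\sum_{l}(-1)^l\binom{n+1}{l}\binom{n+(i-l)b-j}{n}$ from the generating function $(1-y^b)^{n+1}/(1-y)^{n+1}$, and takes the entrywise limit $C(i,j)/b^n\to A(n,i-1)/n!$ for $1\le i\le n$, disposing of the boundary rows via the observation that $C(0,\cdot)$ and $C(n+1,\cdot)$ are delta rows. You never introduce $C$: you expand $h_i^{\langle b\rangle}=\sum_l(-1)^l\binom{n+1}{l}a_{b(i-l)}$ and use the eventual polynomiality of $k\mapsto a_k$ (the Hilbert-polynomial viewpoint) with leading coefficient $h(1)/n!$, which amounts to interchanging the sums over $j$ and $l$ in the paper's computation and has the small virtue of making the normalization by $h(1)$ transparent from the outset. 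Both routes terminate in the same Euler identity $\sum_{l=0}^{i-1}(-1)^l\binom{n+1}{l}(i-l)^n=A(n,i-1)$, and your two patches --- handling $i=n+1$ via $A(n,n)=0$ and isolating the single non-polynomial value $a_0=h_0$, which is $O(1)$ and so dies after division by $b^n$ --- correctly cover the places where the clean formula breaks down. The alternative sketched in your closing paragraph, passing through the carries matrix and the entrywise limit $P_b(i,j)\to A(n,j)/n!$, is essentially the paper's own proof in probabilistic dress.
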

\begin{proof} Let $[y^k]f(y)$ denote the coefficient of $y^k$ in a power
series $f(y)$. Then the definition of $C(i,j)$ gives that
\begin{align*}
C(i,j) & = [y^{ib-j}] \frac{(1-y^b)^{n+1}}{(1-y)^{n+1}} \\
& = \sum_{l \geq 0} (-1)^l {n+1 \choose l} \left[y^{(i-l)b-j}\right]\frac{1}{(1-y)^{n+1}}\\
& = \sum_{l \geq 0} (-1)^l {n+1 \choose l} {n+(i-l)b-j \choose n}.
\end{align*}
Supposing that $1 \leq i \leq n$, it follows that
\begin{equation} \label{e9}
\lim_{b \to\infty} \frac{C(i,j)}{b^n} = \frac{1}{n!} \sum_{l \geq 0}
(-1)^l {n+1 \choose l} (i-l)^n = \frac{A(n,i-1)}{n!}
\end{equation}
where the second equality uses a well-known formula for Eulerian numbers
\cite{co}. Since $C(0,j) = \delta_{0,j}$ and $C(n+1,j)=\delta_{n+1,j}$,
clearly
\begin{equation} \label{e10}
\lim_{b\to\infty} \frac{C(0,j)}{b^n} =
\lim_{b\to\infty}\frac{C(n+1,j)}{b^n} = 0.
\end{equation}
Combining equations \eqref{e9} and \eqref{e10} yields that
\begin{align*}
\lim_{b\to\infty} \frac{h^{<b>}(x)}{b^n \cdot h(1)} & = \lim_{b\to\infty}
\frac{ \sum_{i=0}^{n+1} \left[ \sum_{j=0}^{n+1} C(i,j)
h_j \right] x^i}{b^n \cdot h(1)}\\
& = \frac{\sum_{i=1}^n \left[ A(n,i-1) \sum_{j=0}^{n+1} h_j \right]x^i}{n! \cdot h(1)} \\
& = \frac{p_n(x)}{n!}.
\end{align*}
\end{proof}

Here is an example. The coordinate ring $R$ of a projective variety in
$n+1$ variables decomposes into its graded pieces $R_k$, $0\leq
k\leq\infty$ and the Hilbert series has the form \cite[Th.~11.1]{ati}
\begin{equation*}
\sum_{k=0}^\infty\text{dim}(R_k) x^k=\frac{h(x)}{(1-x)^{n+1}}.
\end{equation*}
The \textit{$b$th Veronese embedding} replaces the variables by all degree
$b$ monomials in these variables. (If $b=3$, $\{x,y\}$ are replaced by
$x^3, x^2y,xy^2,y^3$.) The image of the coordinate ring has Hilbert series
$h^{\langle b\rangle}(x)/(1-x)^{n+1}$. As a simple special case, the full
projective space has coordinate ring $\mathbb{C}[x_1\dots x_{n+1}]$. The
degree $k$ homogeneous polynomials have dimension $\binom{n+k}{n}$ and
\begin{equation}
\sum_{k=0}^\infty\binom{n+k}{n}x^k=\frac{1}{(1-x)^{n+1}}. \label{51}
\end{equation}
When $n+1=2$,
\begin{equation*}
\sum_{k=0}^\infty(k+1)x^k=\frac{1}{(1-x)^2}\quad\text{and}\quad\sum_{k=0}^\infty(bk+1)x^k=\frac{(b-1)x+1}{(1-x)^2}.
\end{equation*}
When $n+1=3$,
\begin{gather*}
\sum_{k=0}^\infty\binom{k+2}{2}x^k=\frac{1}{(1-x)^3}\quad\text{and}\\
\sum_{k=0}^\infty\binom{bk+2}{2}x^k=\frac{x^2\left(b(b-3)+2\right)+x\left(b(b+3)-4\right)+2}{2(1-x)^3}.
\end{gather*} Dividing the right-hand sides of these expressions by $b^n \cdot h(1)$
(here $b$ and $b^2$ respectively), then multiplying by $(1-x)^{n+1}$ and
letting $b \rightarrow \infty$, gives $p_n(x)/n!$ (here $x$ and
$(x^2+x)/2$ respectively).

\subsection{Autoregressive approximation}\label{sub5.2}

This section studies the large $n$ limit of the carries process and shows
it is well approximated by a classical autoregressive process. Throughout,
we work with a general base $b$ and let $n$ be the number of summands. Let
$\kappa_0=0,\kappa_1,\kappa_2,\dots$ be the carries process on
$\{0,1,\dots,n-1\}$. Let $Y_t=(\kappa_t-n/2)/\sqrt{n/12}$,
$t=0,1,2,\dots$. Theorem \ref{thm5.2} relates $Y_t$ to a Gaussian
autoregressive process $W_0,W_1,W_2,\dots$ defined by $W_0=-\sqrt{3n}$,
$W_{t+1}=\frac{W_t}{b} +\epsilon_t$, with the $\epsilon_t$ independent
Normal$(0,1-\frac{1}{b^2})$ random variables.

\begin{theorem}\label{thm5.2}
Let $P_n$ be the law of the process $Y_t$, $0\leq t<\infty$, on
$\real^\infty$. Let $Q$ be the law of the process $W_0,W_1,\dots$ on
$\real^\infty$. Then $P_n\Rightarrow Q$ as $n \rightarrow \infty$.
\end{theorem}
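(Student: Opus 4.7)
The plan is to prove $P_n \Rightarrow Q$ on $\real^\infty$ (with the product topology) by establishing convergence of every finite-dimensional distribution, which is equivalent to weak convergence on this Polish space. The initial conditions agree exactly, since $Y_0 = (0 - n/2)/\sqrt{n/12} = -\sqrt{3n} = W_0$, so only the transition dynamics need to be matched.

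The key input is an exact one-step identity for the carries recursion. Conditional on $\kappa_t = k$, the next carry satisfies $k + S_n = b\,\kappa_{t+1} + r_n^{(t+1)}$, where $S_n = X_1 + \cdots + X_n$ is the sum of the $n$ digits in column $t+1$ (i.i.d.\ uniform on $\{0,1,\dots,b-1\}$) and $r_n^{(t+1)} \in \{0,\dots,b-1\}$ is the resulting units digit. Translating to the rescaled variables,
\[
b\, Y_{t+1} \;=\; Y_t \;+\; Z_n^{(t+1)} \;-\; \delta_n^{(t+1)},
\]
where $Z_n^{(t+1)} := (S_n - n(b-1)/2)/\sqrt{n/12}$ and $\delta_n^{(t+1)} := r_n^{(t+1)}/\sqrt{n/12} = O(n^{-1/2})$. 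The digits have mean $(b-1)/2$ and variance $(b^2-1)/12$, so the classical CLT gives $Z_n^{(t+1)} \Rightarrow \mathcal{N}(0, b^2-1)$; setting $\epsilon_t^{(n)} := Z_n^{(t+1)}/b$ produces increments with the correct limit law $\mathcal{N}(0, 1 - 1/b^2)$. Because different columns use disjoint blocks of digits, the $\epsilon_t^{(n)}$ are jointly independent across $t$, and each is independent of $Y_0, \dots, Y_t$.

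With these one-step dynamics in hand, I would pass to a common probability space via the Skorokhod representation theorem so that $\epsilon_t^{(n)} \to \epsilon_t$ almost surely for each $t$, with the $\epsilon_t$ i.i.d.\ $\mathcal{N}(0, 1 - 1/b^2)$. Starting both chains from $Y_0 = W_0 = -\sqrt{3n}$, the exact recursions $Y_{t+1} = Y_t/b + \epsilon_t^{(n)} - \delta_n^{(t+1)}/b$ and $W_{t+1} = W_t/b + \epsilon_t$ yield, by induction on $t$,
\[
|Y_t - W_t| \;\leq\; \tfrac{1}{b}\,|Y_{t-1} - W_{t-1}| \;+\; |\epsilon_{t-1}^{(n)} - \epsilon_{t-1}| \;+\; O(n^{-1/2}),
\]
so iterating, the contraction factor $1/b < 1$ forces $|Y_t - W_t| \to 0$ almost surely for every fixed $t$. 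Hence each finite slice $(Y_0, \dots, Y_T)$ converges in law to $(W_0, \dots, W_T)$, yielding the desired weak convergence on $\real^\infty$.

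The main obstacle is less technical than interpretive: because $W_0 = -\sqrt{3n}$ depends on $n$, the target law $Q$ is not literally fixed as $n$ varies, so $P_n \Rightarrow Q$ must be read through the coupling above --- equivalently, one subtracts the common deterministic drift $-\sqrt{3n}/b^t$ and argues that the centered processes converge to the stationary Gaussian AR(1) limit $\sum_{s<t} b^{s-t}\epsilon_s$. The strict contraction $x \mapsto x/b$ together with the independence of the column CLT fluctuations from the past is what makes the coupling work cleanly; the only remaining bookkeeping is that the discretization errors $\delta_n^{(t+1)}$ accumulate only to $O(n^{-1/2})$, which follows from the same geometric-series bound.
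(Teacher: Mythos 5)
Your proof is correct, and although it shares the paper's overall skeleton (reduce weak convergence on $\real^\infty$ to finite-dimensional distributions, exhibit an AR(1)-type recursion for $\kappa_t$, apply a central limit theorem to the increments, and discard a remainder bounded by $b$ that vanishes under the $\sqrt{n/12}$ rescaling), your route to the recursion is genuinely different and more elementary. The paper's key input is Lemma \ref{rel}: starting from Holte's generating-function formula $P(i,j)=b^{-n}[x^{(j+1)b-i-1}](1+x+\cdots+x^{b-1})^{n+1}$ it writes $\kappa_{t+1}=\lfloor\kappa_t/b\rfloor+\epsilon_{t+1}$ with $\epsilon_{t+1}$ distributed as a sum of $n+1$ discrete uniforms \emph{conditioned} on a congruence mod $b$, and therefore needs a local central limit theorem to normalize these integer-valued increments before unrolling the recursion and invoking Slutsky. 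You instead use the bare carrying identity $\kappa_t+S_n=b\,\kappa_{t+1}+r$ with $S_n$ the unconditioned column sum of $n$ digits, so the ordinary CLT suffices, independence across columns is immediate, and you finish with a Skorokhod coupling plus the contraction $x\mapsto x/b$ rather than the paper's explicit unrolling; what you give up is the exact integer-valued representation of Lemma \ref{rel}, which has independent interest. You also correctly flag a point the paper passes over silently: since $W_0=-\sqrt{3n}$, the target law $Q$ moves with $n$, so the statement must be read as saying that the deterministic drift $-\sqrt{3n}/b^{t}$ is common to both processes and the centered parts converge --- exactly what your coupling (and the paper's unrolled formula for $Y_{t+1}$) delivers. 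The one piece of bookkeeping to make explicit is that after the Skorokhod representation of the vectors $(\epsilon_0^{(n)},\dots,\epsilon_{T-1}^{(n)})$ the remainders $\delta_n^{(t)}$ are not functions of the $\epsilon$'s alone, so you should compare against the auxiliary process defined by the recursion with the $\delta$ terms deleted and use the deterministic bound $|\delta_n^{(t)}|\leq (b-1)/\sqrt{n/12}$; this is routine and you already state the needed estimate.
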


The following lemma will be helpful for proving Theorem \ref{thm5.2}.

\begin{lemma} \label{rel} The base $b$ carries process can be represented as
follows: \begin{equation} If \ \kappa_t = r \mod b, \ \ \ let \
\kappa_{t+1}=\frac{\kappa_t - r}{b} + \epsilon_{t+1} \end{equation} where
$\mathbb{P}(\epsilon_{t+1}=k)$ is the chance that the sum of $n+1$
independent discrete uniforms on $\{0,1,\cdots,b-1\}$ is equal to
$bk+b-r-1$, given that the sum is congruent to $b-r-1$ mod $b$.
\end{lemma}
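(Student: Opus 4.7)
The plan is to unpack the carries recursion from \ref{sub11}. By definition, if $X_1,\dots,X_n$ are i.i.d.\ uniform on $\{0,1,\dots,b-1\}$ and $S_t := X_1+\cdots+X_n$ denotes the column-$(t+1)$ digit sum (independent of $\kappa_t$), then
\begin{equation*}
\kappa_{t+1} = \left\lfloor \frac{\kappa_t + S_t}{b} \right\rfloor.
\end{equation*}
Writing $\kappa_t = qb + r$ with $q = (\kappa_t - r)/b$ and $r \in \{0,1,\dots,b-1\}$, the quotient splits off cleanly:
\begin{equation*}
\kappa_{t+1} = q + \left\lfloor \frac{r + S_t}{b} \right\rfloor = \frac{\kappa_t - r}{b} + \epsilon_{t+1},
\qquad \epsilon_{t+1} := \left\lfloor \frac{r + S_t}{b} \right\rfloor.
\end{equation*}

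The next step is to identify the law of $\epsilon_{t+1}$ with the conditional law described in the statement. The trick is to introduce one extra independent digit $U$ uniform on $\{0,1,\dots,b-1\}$, so that $S'_t := S_t + U$ is a sum of $n+1$ i.i.d.\ uniforms. Since $U$ is uniform mod $b$ independently of $S_t$, $S'_t$ is uniform mod $b$, giving $\mathbb{P}(S'_t \equiv b-r-1 \bmod b) = 1/b$. Then a direct calculation,
\begin{equation*}
\mathbb{P}(\epsilon_{t+1} = k) = \sum_{j=0}^{b-1} \mathbb{P}(S_t = kb + j - r) = \sum_{u=0}^{b-1} \mathbb{P}(S_t = kb + b - 1 - u - r) = b \cdot \mathbb{P}(S'_t = bk + b - r - 1),
\end{equation*}
obtained by substituting $j = b - 1 - u$ and using $\mathbb{P}(U=u) = 1/b$, matches exactly
\begin{equation*}
\mathbb{P}\bigl(S'_t = bk + b - r - 1 \,\bigm|\, S'_t \equiv b-r-1 \bmod b\bigr).
\end{equation*}

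There is no real obstacle here beyond bookkeeping: the content is the index reparametrization $j \leftrightarrow b-1-u$ together with the observation that convolving with one extra uniform digit produces a sum that is uniform mod $b$, which both trivializes the conditioning constant and converts the ordinary probability of $\epsilon_{t+1} = k$ into a conditional probability for $S'_t$. The reason for phrasing the noise in this form, rather than simply as $\lfloor (r+S_t)/b\rfloor$, is that it exposes $\epsilon_{t+1}$ as (approximately) a centered sum of $n+1$ uniforms rescaled by $b$, which is the form needed to apply a central limit theorem in Theorem \ref{thm5.2} and compare $\kappa_{t+1} \approx \kappa_t/b + \epsilon_{t+1}$ to the Gaussian AR(1) recursion $W_{t+1} = W_t/b + \varepsilon_t$.
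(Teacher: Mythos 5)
Your proof is correct. The recursion $\kappa_{t+1}=\lfloor(\kappa_t+S_t)/b\rfloor$ with $S_t$ the new column's digit sum (independent of $\kappa_t$) is indeed the definition of the carries chain, the floor splits as you claim, and the identity $\mathbb{P}(\epsilon_{t+1}=k)=\sum_{j=0}^{b-1}\mathbb{P}(S_t=kb+j-r)=b\,\mathbb{P}(S_t+U=bk+b-r-1)$ together with the uniformity of $S_t+U$ mod $b$ gives exactly the stated conditional law.

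Your route differs from the paper's in a worthwhile way. The paper does not rederive the transition mechanism: it quotes Holte's closed form $P(i,j)=b^{-n}[x^{(j+1)b-i-1}](1+x+\cdots+x^{b-1})^{n+1}$, substitutes $j=\frac{i-r}{b}+\epsilon_{t+1}$ to turn the exponent into $b-r-1+\epsilon_{t+1}b$, and then invokes equidistribution mod $b$ of the sum of $n+1$ uniforms to read off the conditional probability. There the $(n+1)$st power is simply inherited from Holte. You instead work from first principles and manufacture the $(n+1)$st uniform yourself, as an explicit convolution that sweeps out the window of $b$ consecutive values of $S_t$ compatible with $\epsilon_{t+1}=k$; the reindexing $j\leftrightarrow b-1-u$ is the combinatorial content of Holte's extra factor $(1+x+\cdots+x^{b-1})$. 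The two arguments are the same computation in probabilistic versus generating-function language. Your version is self-contained (no appeal to Holte's formula) and explains \emph{why} $n+1$ rather than $n$ uniforms appear; the paper's is shorter given \eqref{54} as input. Both rest on the same final observation that conditioning on a residue class mod $b$ multiplies the point probability by $b$.
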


\begin{proof} From page 140 of Holte \cite{holte} one can write the
carries transition probability as
\begin{equation}
P(i,j)=\frac{1}{b^n} [x^{(j+1)b-i-1}] (1+x+\cdots+x^{b-1})^{n+1}
\label{54}
\end{equation} where $[x^k] f(x)$ denotes the coefficient of $x^k$ in
a polynomial $f(x)$. If $i=r \mod b$, write $j=\frac{i-r}{b} +
\epsilon_{t+1}$. Then \eqref{54} becomes
\[ \frac{1}{b^n} [x^{b-r-1+(\epsilon_{t+1})b}]
(1+x+\cdots+x^{b-1})^{n+1}.\] To see that this implies the lemma, note
that the sum of $n+1$ discrete uniforms on $\{0,1,\cdots,b-1\}$ is
equidistributed mod $b$, and so is congruent to $b-r-1 \mod b$ with
probability $\frac{1}{b}$. \end{proof}

\begin{proof} (Of Theorem \ref{thm5.2}) We show convergence by showing
that $\{P_n\}_{n=1}^\infty$ is tight and that the finite dimensional
distributions of $P_n$ converge to the finite dimensional distributions of
$Q$. This is enough from \cite[2.2, 4.3, 4.5]{eth}. From \cite[2.4]{eth},
$P_n$ is tight if and only if the family $P_n^h$ of $h$th marginal
distributions is tight. Thus it is enough to show that the finite
dimensional distributions converge.

By Lemma \ref{rel} the carries process can be represented as:
\begin{equation} If \ \kappa_t = r \mod b, \ \ \ let \
\kappa_{t+1}=\frac{\kappa_t - r}{b} + \epsilon_{t+1} \end{equation} where
$\mathbb{P}(\epsilon_{t+1}=k)$ is the chance that the sum of $n+1$ i.i.d.
uniforms on $\{0,1,\cdots,b-1\}$ is equal to $bk+b-r-1$, given that the
sum is congruent to $b-r-1$ mod $b$. Hence $b \epsilon_{t+1}$ (for
$\kappa_t = b-1$ mod b) has the distribution of the sum of $n+1$ uniforms
on $\{0,1,\cdots,b-1\}$ given that the sum is congruent to $0$ mod b. A
generating function argument then shows that if $n \geq 2$ then
$\epsilon_{t+1}$ has mean $\frac{n+1}{2} \left( 1 - \frac{1}{b} \right)$
and variance $\frac{n+1}{12} \left( 1 - \frac{1}{b^2} \right)$. By the
local central limit theorem for sums of i.i.d. random variables,
\begin{equation*}
\frac{\epsilon_{t+1}- \frac{n}{2} \left( 1 - \frac{1}{b}
\right)}{\sqrt{\frac{n}{12} \left( 1 - \frac{1}{b^2} \right)}
}\Longrightarrow\mathcal{N}(0,1)
\end{equation*} as $n \rightarrow \infty$, and a similar argument gives
the same conclusion for $\kappa_t$ congruent to any $r$ mod b, with error
term $O(n^{-1/2})$ since $b$ is fixed.

From these considerations, the joint distribution of
$(\epsilon_1,\epsilon_2,\dots,\epsilon_h)$, normalized as above, converges
to the product of $h$ independent standard normal variables ($h$ fixed,
$n$ large).

Next, represent $\kappa_{t+1}=\frac{\kappa_t-\delta_t}{b}+\epsilon_{t+1}$
with $\delta_t= \kappa_t \mod b$. Thus, for $t=1,2,3,\dots,h-1$, with
$\kappa_0=\delta_0=0$,
\begin{equation*}
\kappa_{t+1}=\frac{\kappa_0}{b^{t+1}}+\left(\epsilon_{t+1}+\frac{\epsilon_t}{b}
+\dots+\frac{\epsilon_1}{b^t}\right) -
\left(\frac{\delta_t}{b}+\frac{\delta_{t-1}}{b^2}+\dots+\frac{\delta_0}{b^{t+1}}
\right). \end{equation*} Since $\kappa_{t+1}=\sqrt{n/12} \cdot Y_{t+1} +
\frac{n}{2}$ and $\kappa_0=\sqrt{n/12} \cdot Y_0 + \frac{n}{2}$, it
follows that
\begin{eqnarray*}
Y_{t+1} & = & \frac{Y_0}{b^{t+1}} - \frac{n/2}{\sqrt{n/12}}
\left(1-\frac{1}{b^{t+1}} \right) + \frac{1}{\sqrt{n/12}} \left(
\epsilon_{t+1}+\frac{\epsilon_t}{b} +\dots+\frac{\epsilon_1}{b^t}\right)\\
& & - \frac{1}{\sqrt{n/12}}
\left(\frac{\delta_t}{b}+\frac{\delta_{t-1}}{b^2}+\dots+\frac{\delta_0}{b^{t+1}}
\right) \\
& = & \frac{Y_0}{b^{t+1}} + \frac{1}{\sqrt{n/12}} \left[ \left(
\epsilon_{t+1}- \frac{n}{2}(1-\frac{1}{b}) \right)
+\dots+ \frac{\left(\epsilon_1 -\frac{n}{2} (1- \frac{1}{b}) \right)}{b^t} \right]\\
& & - \frac{1}{\sqrt{n/12}}
\left(\frac{\delta_t}{b}+\frac{\delta_{t-1}}{b^2}+\dots+\frac{\delta_0}{b^{t+1}}
\right). \end{eqnarray*} As noted earlier, the $\frac{\epsilon_i -
\frac{n}{2}(1-1/b)}{\sqrt{n/12}}$ converge to independent
$\mathcal{N}(0,1-\frac{1}{b^2})$'s. Since $|\delta_i| \leq b$ for all $i$,
the term involving the $\delta$'s converges to 0 with probability 1, so by
Slutsky's theorem, it can be disregarded. We thus have that the joint
distribution of $(Y_0,Y_1,\dots,Y_h)$ converges to the joint distribution
of $(W_0,W_1,\dots,W_h)$, and the proof is complete.
\end{proof}
\begin{rem}
  The Gaussian autoregressive process
  $X_{n+1}=\frac{1}{b}X_n+\epsilon_{n+1}$ (with $X_0=x$) is carefully
  studied in \cite{pd164}. It has eigenvalues $1,1/b,1/b^2,\dots$ and
  takes order $\log_b|x|+c$ steps to converge \cite[Prop.~4.9]{pd164}.
  Taking $x=-\sqrt{3n}$ as in Theorem \ref{thm5.2}, this is consistent
  with our result in \ref{sec3} that $\tfrac12\log_b(n)+c$ steps is the
  right answer for the carries chain.
\end{rem}
\begin{rem}
Theorem \ref{thm5.2} implies that many properties of Gaussian
autoregressive processes (here the discrete time Ornstein-Uhlenbeck
process) apply to carries--at least in the limit. For example Corollary 2
of Lai \cite{Lai} implies, in the notation above, that \[ \mathbb{P}(W_t
\geq b_t \ {\rm i.o.}) = 1 \ {\rm or} \ 0 \ {\rm according \ as} \
\sum_{t=0}^{\infty} b_t^{-1}e^{-b_t^2/2} = \infty \ {\rm or} \ < \infty.\]
It follows for carries that \[ \mathbb{P} \left( \kappa_t \geq \frac{n}{2}
+ \sqrt{\frac{n}{12} (\log(t))^{1+\epsilon}} \ {\rm i.o.} \right) = 1 \
{\rm or} \ 0 \ {\rm according \ as} \ \epsilon \leq 0 \ {\rm or} \
\epsilon
> 0.\]
\end{rem}

\section*{Acknowledgments} We thank Christos Athanasiadis, Sourav Chatterjee, and
Tze Lai for their help. Diaconis was partially supported by NSF grant DMS
0505673. Fulman was partially supported by NSF grant DMS 0802082 and NSA
grant H98230-08-1-0133.


\begin{thebibliography}{99}

\bibitem{pd171} S. Asaf, P. Diaconis, and K. Soundararajan, A rule of thumb
for riffle shuffling, Preprint, Department of Statistics, Stanford
University, Stanford, CA, 2008.

\bibitem{AD} C. Athanasiadis and P. Diaconis, Random walks on
sub-arrangements, Preprint, Department of Statistics, Stanford University,
Stanford, CA, 2009.

\bibitem{ati} M. Atiyah and I. G. Macdonald, {\it Introduction to commutative
algebra}, Addison-Wesley Publishing Co., Reading, Mass.-London-Don Mills,
Ont. 1969.

\bibitem{pd92} D. Bayer and P. Diaconis, Trailing the dovetail shuffle
to its lair, \textit{Ann. Appl. Probab.} \textbf{2} (1992) 294-313.

\bibitem{beck} M. Beck and A. Stapledon, On the log-concavity of Hilbert
series of Veronese subrings and Ehrhart series, arXiv: 0804.3639 (2008).

\bibitem{ber} F. Bergeron and N. Bergeron,
Orthogonal idempotents in the descent algebra of $B\sb n$ and
applications, {\it J. Pure Appl. Algebra} {\bf 79} (1992), 109-129.

\bibitem{nantel} N. Bergeron, Hyperoctahedral operations on Hochschild
homology, {\it Adv. Math.} {\bf 110} (1995), 255-276.

\bibitem{bid} P. Bidigare, P. Hanlon, and D. Rockmore, A combinatorial
description of the spectrum for the Tsetlin library and its generalization
to hyperplane arrangements, {\it Duke Math. J.} {\bf 99} (1999), 135-174.

\bibitem{bren} F. Brenti and V. Welker, The Veronese construction for
formal power series and graded algebras, arXiv:0712.2645 (2007).

\bibitem{brown} K. S. Brown, Semigroups, rings, and Markov chains, {\it
J. Theoret. Probab.} {\bf 13} (2000), 871-938.

\bibitem{brd} K. S. Brown and P. Diaconis, Random walks and
hyperplane arrangements, {\it Ann. Probab.} {\bf 26} (1998), 1813-1854.

\bibitem{cha} D. Chakerian and D. Logothetti, Cube slices,
pictorial triangles, and probability, {\it Math. Mag.} {\bf 64} (1991),
219-241.

\bibitem{co} L. Comtet, {\it Advanced combinatorics}, D. Reidel Publishing
Co., Dordrecht, 1974.

\bibitem{pd30} P. Diaconis, Mathematical developments from the analysis
of riffle-shuffling, in \textit{Groups, Combinatorics and Geometry}, A.
Ivanov, M. Liebeck and J. Saxl, eds, World Scientific, New Jersey, 2003,
73-97.

\bibitem{pd173} P. Diaconis and J. Fulman, Carries, shuffling, and an
amazing matrix, (2008), to appear in {\it Amer. Math. Monthly}, available
at http://www-rcf.usc.edu/$\sim$fulman/carries.pdf.

\bibitem{pd170} P. Diaconis, K. Khare, and L. Saloff-Coste, Gibbs sampling,
conjugate priors, and coupling. Technical report, Department of
Statistics, Stanford University, 2008.

\bibitem{pd164} P. Diaconis, K. Khare, and L. Saloff-Coste, Gibbs sampling,
conjugate priors, and orthogonal polynomials, with discussion, {\it
Statist. Sci.} {\bf 23} (2008), 151-200.

\bibitem{pd77} P. Diaconis, M. McGrath, and J. Pitman, Riffle
shuffles, cycles and descents, \textit{Combinatorica} \textbf{15} (1995),
11-29.

\bibitem{eis} D. Eisenbud, A. Reeves, and B. Totaro,
Initial ideals, Veronese subrings, and rates of algebras, {\it Adv. Math.}
{\bf 109} (1994), 168-187.

\bibitem{eth} S. Ethier and T. Kurtz, {\it Markov processes}, Wiley Series
in Probability and Mathematical Statistics: Probability and Mathematical
Statistics. John Wiley \& Sons, Inc., New York, 1986.

\bibitem{ful01} J. Fulman, Applications of the Brauer complex: card shuffling,
permutation statistics, and dynamical systems, \textit{J. Algebra}
\textbf{243} (2001), 96-122.

\bibitem{ful02} J. Fulman, Applications of symmetric functions to cycle
  and increasing subsequence structure after shuffles,
  \textit{J. Algebr. Comb.} \textbf{16} (2002), 165-194.

\bibitem{ges} I. Gessel and C. Reutenauer,
Counting permutations with given cycle structure and descent set, {\it J.
Combin. Theory Ser. A} {\bf 64} (1993), 189-215.

\bibitem{holte} J. Holte, Carries, combinatorics, and an amazing
matrix, \textit{Amer. Math. Monthly} \textbf{104} (1997) 138-149.

\bibitem{kar} S. Karlin, {\it Total positivity. Vol. I},
Stanford University Press, Stanford, CA, 1968.

\bibitem{kem} J. G. Kemeny and J. L. Snell, {\it Finite Markov chains}. The University
Series in Undergraduate Mathematics D. Van Nostrand Co., Inc., Princeton,
N.J.-Toronto-London-New York, 1960.

\bibitem{Lai} T. L. Lai, Gaussian processes, moving averages and quick detection
problems, {\it Ann. Probability} {\bf 1} (1973), 825-837.

\bibitem{Pi} J. W. Pitman, Probabilistic bounds on the coefficients of polynomials
with only real zeros, {\it J. Combin. Theory Ser. A} {\bf 77} (1997),
279-303.

\bibitem{rog} L. C. G. Rogers and J. W. Pitman, Markov functions, {\it Ann. Probab.}
{\bf 9} (1981), 573-582.

\bibitem{schm} F. Schmidt and R. Simion, Some geometric probability
problems involving the Eulerian numbers, {\it Electron. J. Combin.} {\bf
4} (1997), Research Paper 18, approx. 13 pp. (electronic).

\bibitem{sta77} R. Stanley, Eulerian partitions of a unit hypercube. In:
{\it Higher Combinatorics}, M. Aigner, editor, page 49. Reidel,
Dordrecht/Boston, 1977.

\bibitem{sta01} R. Stanley, Generalized riffle shuffles
and quasisymmetric functions, {\it Ann. Comb.} {\bf 5} (2001), 479-491.

\bibitem{sta99} R. Stanley, \textit{Enumerative Combinatorics II}, Cambridge
University Press, Cambridge, 1999.

\bibitem{Stn} C. Stein, \textit{Approximate computation of expectations},
Institute of Mathematical Statistics Lecture Notes---Monograph Series, 7.
Institute of Mathematical Statistics, Hayward, CA, 1986.

\end{thebibliography}
\end{document}